\newtheorem{theorem}{Theorem}[section]
\newtheorem{corollary}[theorem]{Corollary}
\newtheorem{lemma}[theorem]{Lemma}
\newtheorem{theo}{Theorem}
\theoremstyle{definition}
\newtheorem*{claim}{Claim}
\newtheorem{definition}[theorem]{Definition}
 \newtheorem{remark}[theorem]{Remark}
	\newtheorem{que}[theorem]{Question}
\newtheorem*{question}{Question}	
\newtheorem*{defii}{Definition}
\newtheorem*{corol}{Corollary}
\newtheorem*{ttt}{Theorem}
\begin{document}

\title[F\o lner functions and the generic WP for finitely generated amenable groups]{F\o lner functions and the generic Word Problem for finitely generated amenable groups}

\author{Matteo Cavaleri}
\address[M. Cavaleri]{Universit\`{a} degli Studi Niccol\`{o} Cusano - Via Don Carlo Gnocchi, 3 00166 Roma, Italia}
\email{matte.cavaleri@gmail.com}
\subjclass[2010]{20F10, 03D40, 43A07, 03B25}

\begin{abstract}
We  introduce and investigate different definitions of effective amenability, in terms of computability of F\o lner sets, Reiter functions, and F\o lner functions. As a consequence, we prove that recursively presented amenable groups have subrecursive F\o lner function, answering a question of Gromov; for the same class of groups we prove that solvability of the Equality Problem on a generic set (generic EP) is equivalent to solvability of the Word Problem on the whole group (WP), thus providing the first examples of finitely presented groups with unsolvable generic EP. In particular, we prove that for finitely presented groups,  solvability of generic WP doesn't imply solvability of generic EP.

\end{abstract}

\maketitle

\section{Introduction}
In this paper we define and study some effective versions of amenability for finitely generated groups, in terms of computability of F\o lner sets, computability of Reiter functions and subrecursivity of F\o lner functions.

Let $\Gamma$ be a group generated by a finite subset $X$.
 Given $n\in\mathbb N$, we say (cf. \cite{VER}) that a non-empty finite subset $\Omega \subset \Gamma$ is an \emph{$n$-F\o lner set} (with respect to $X$) if
\begin{equation}\label{folner}\frac{|\Omega \setminus x \Omega|}{|\Omega|}\leq n^{-1}, \;\;\;\forall x\in X.\end{equation}
We denote by $\mathfrak F\o l_{\Gamma,X}(n)$ the set of all  $n$-F\o lner sets of $\Gamma$ with respect to $X$. Moreover, we say that 
a sequence $(\Omega_n)_{n\in \mathbb N}$ of subsets  of $\Gamma$ is a \emph{F\o lner sequence} if for every $n\in \mathbb N$, $\Omega_n\in \mathfrak F\o l_{\Gamma,X}(n).$
A related important notion is the F\o lner function $F_{\Gamma,X}$, introduced by Vershik  \cite{VER}, that measures the cardinality of the smallest F\o lner sets:
$$F_{\Gamma,X}(n):=\min\{|\Omega|:\;\; \Omega\in \mathfrak F\o l_{\Gamma,X}(n)\},$$
with the convention that $\min \emptyset := \infty$.
It is well known that the existence of a F\o lner sequence and the asymptotic behaviour of the function $F_{\Gamma,X}$ does not depend on the choice of $X$: we say that $\Gamma$ is \emph{amenable} if it admits a F\o lner sequence (and therefore  $F_{\Gamma,X}(n)<\infty,\;\forall n\in \mathbb N$).

A  function $f\colon \mathbb N\to \mathbb N$ is said to be \emph{recursive} if there exists an algorithm (Turing machine) that:\\
(i) stops for every input $n$;\\
(ii) \emph{ computes $f$}, that is, gives $f(n)$ as an  output.\\
 A function is \emph{subrecursive} if it admits a recursive upper bound. We refer to \cite{Mal} for general computability theory.

Vershik himself was interested in algorithmic behaviour of F\o lner functions, conjecturing the existence of arbitrarily fast  growing F\o lner functions. 
This was confirmed by Erschler \cite{A2},  who provided examples of finitely generated groups with F\o lner function growing faster than any given function, even non-subrecursive. In particular, the F\o lner sets of those groups are missing any algorithmic description. Analogous results were recovered in \cite{G,OO}. We finally mention that, most recently,  Brieussel and Zheng \cite[Cor 4.7]{Zheng} have shown that any non-decreasing function is asymptotically equivalent to the F\o lner function of some finitely generated group.

However the behaviour for finitely presented groups remained open:
\begin{question}{\cite[p.578, Gromov]{G}}
``(d) Is there an universal bound on the asymptotic growth of the F\o lner
functions of finitely presented amenable groups by a recursive (primitively recursive?)
function? (Maybe there is such a bound in every given recursive
class of presentations?). Or, at another extreme, are there finitely presented
amenable groups with so fast growing F\o lner function, such that their amenability is unprovable
in Arithmetic? (An enticing possibility would be this situation for the Thompson group).''
\end{question}
The above-mentioned possibility about Thompson group was studied in \cite{Moo}: if the Thompson group $F$ is amenable then its F\o lner function grows faster than any iterated exponential. For recursively presented groups, in  \cite{A1} Erschler showed that the asymptotics of the F\o lner function of the $k$-iterated wreath-product of $\mathbb Z$ is the $k$-th tetration of $n$.

One of our main results (Section \ref{reisec}) is the following partial answer to the aforementioned question of Gromov:
\begin{theo}\label{A}
The F\o lner function of a recursively presented amenable group is subrecursive.  Moreover, every recursively enumerable class of recursive amenable presentations admits a uniform recursive upper bound for the asymptotic growth of the corresponding F\o lner functions.
\end{theo}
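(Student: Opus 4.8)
The plan is to route everything through Reiter functions, the point being that the one obstruction to an effective treatment of amenability --- the possibly unsolvable word problem --- interacts with the Reiter inequality in a one-sided way that can be exploited. I would work with \emph{combinatorial $1/n$-Reiter certificates}: such a certificate consists of a finite list of pairwise distinct reduced words $v_1,\dots,v_m$ over $X^{\pm1}$, nonnegative rationals $q_1,\dots,q_m$ with $\sum_i q_i=1$, and, for each $x\in X$ and each $i$, a further reduced word $u_{x,i}$ together with an explicit word witnessing that $(xv_i)u_{x,i}^{-1}$ is a product of conjugates of defining relators, all subject to the purely combinatorial requirement that for every $x\in X$
\[
\sum_{w}\Bigl| q^{\sharp}(w)-\sum_{i:\,u_{x,i}=w}q_i\Bigr|\le\frac1n ,
\]
where $q^{\sharp}$ extends $(q_i)$ by zero and $w$ ranges over the finitely many words occurring. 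Because the defining relators are recursively enumerable, being such a certificate is a decidable condition (for a recursive presentation; for a recursively enumerable one after adjoining enumeration witnesses to the data), so the level-$n$ certificates can be effectively listed.

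The two facts I would establish are \emph{soundness} and \emph{completeness}. Soundness: a level-$n$ certificate produces an honest $1/n$-Reiter function $\bar f$ on $\Gamma$, namely the push-forward of the measure $(q_i)$ along $i\mapsto[v_i]\in\Gamma$; since pushing a nonnegative function forward never increases its $\ell^1$-norm, and the certified relator identities make the push-forward of the formal ``$x$-translate'' of $(q_i)$ equal to $x\bar f$, one gets $\|\bar f-x\bar f\|_1$ bounded by the displayed sum, while $\|\bar f\|_1=1$ and $|\mathrm{supp}\,\bar f|\le m$; crucially, any coincidences $[v_i]=[v_j]$ the certificate failed to detect only shrink the support and the norm, so they do no harm. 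Completeness: if $\Gamma$ is amenable then for every $n$ some level-$n$ certificate exists --- take a genuine $2n$-F\o lner set $\Omega$, one reduced word $v_\omega$ for each $\omega\in\Omega$, weights $q_{v_\omega}=1/|\Omega|$, and set $u_{x,v_\omega}=v_{x\omega}$ when $x\omega\in\Omega$ and $u_{x,v_\omega}$ equal to the free reduction of $xv_\omega$ otherwise; the required relator words exist because the corresponding equalities hold in $\Gamma$, and a direct count rewrites the combinatorial sum as $2\,|\Omega\setminus x\Omega|/|\Omega|\le 1/n$.

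With these in hand the first assertion follows: the algorithm, on input $n$, enumerates level-$n$ certificates until one appears and outputs the number $m_\Gamma(n)$ of words it uses; completeness makes this halt for every $n$, so $m_\Gamma$ is total recursive, and soundness gives a $1/n$-Reiter function with support of size at most $m_\Gamma(n)$. I would then invoke the standard coarea passage from Reiter functions to F\o lner sets --- for a suitable threshold $t$ the level set $\{f>t\}$ is contained in $\mathrm{supp}\,f$ and is an $\lfloor n/(2|X|)\rfloor$-F\o lner set --- to conclude $F_{\Gamma,X}(\lfloor n/(2|X|)\rfloor)\le m_\Gamma(n)$, so $F_{\Gamma,X}$ is subrecursive. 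For the uniform statement I would fix a machine enumerating the presentations $P_0,P_1,\dots$ of the given class (all amenable) and dovetail the search over pairs $(i,n)$: completeness again forces $(i,n)\mapsto m_i(n)$ to be total recursive, hence so is a bound $b(i,n)\ge F_{P_i}(n)$ obtained from the coarea step, and then $\Phi(n):=\max_{i\le n}b(i,n)$ is a recursive function with $F_{P_i}(n)\le\Phi(n)$ for all $n\ge i$ --- a common recursive bound on the asymptotic growth of all F\o lner functions in the class.

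The hard part, and essentially the only point, is the interaction with the word problem, and the certificate scheme is designed around it: equality in $\Gamma$ need not be decidable, but the Reiter inequality is sound under \emph{any} incomplete set of confirmed equalities, and it becomes checkable as soon as the finitely many equalities witnessing some honest F\o lner set have surfaced in the enumeration of the word problem --- so amenability alone makes the search terminate. The one technical nuisance I anticipate is that the $x$-translates of the chosen support words typically leave the support, which is precisely why the certificate carries the auxiliary words $u_{x,i}$ and their relator proofs as separate data rather than as a permutation of $\{v_1,\dots,v_m\}$; getting the bookkeeping of this "thickened" support right (and checking that the combinatorial sum really collapses to $2\,|\Omega\setminus x\Omega|/|\Omega|$) is the only place where a careful computation is needed.
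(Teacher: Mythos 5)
Your proposal is correct and takes essentially the same route as the paper: your ``certificates'' are a repackaging of the subroutine $\mathfrak K(n)$ in Theorem \ref{imp} (a computable, one-sided over-estimate of the Reiter quotient using only finitely many confirmed relations), completeness is witnessed exactly as in the paper by a F\o lner set lifted injectively so that the pushforward of the uniform weights is genuinely $n$-invariant, and the passage to the F\o lner function via the layer-cake/Namioka step and the uniform bound via $\max_{i\le n}$ over a recursive enumeration coincide with the paper's proof of Theorem \ref{imp} and Corollary \ref{c3}. The only slips are cosmetic: pushing forward a nonnegative function preserves (rather than shrinks) the $\ell^1$-norm, which is what you in fact use when asserting $\|\bar f\|_{1}=1$, and your constant $\lfloor n/(2|X|)\rfloor$ in the Reiter-to-F\o lner step is more lossy than necessary but still yields subrecursiveness.
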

\begin{proof}
The first sentence follows from Theorem \ref{imp}, the second from Corollary \ref{c3}.
\end{proof}

The main tool used in the proof of the above theorem is the construction of a uniform algorithm $\widehat{\mathfrak{K}}$, described in Theorem \nolinebreak \ref{imp}, that for any $n\in \mathbb N$ and any recursive presentation, provides, if it exists, a function on the associated free group whose pushforward on the group is $n$-invariant (an equivalent notion for amenability, see Section \ref{prel}). Let us fix some notation.

 With any finite set $X$ of generators of $\Gamma$, we associate a set $\mathsf X$ and a bijection $\varphi\colon \mathsf X\to X$. 
 %${\mathsf x \mapsto \varphi(\mathsf x):=x}$.
 We denote by $\mathbb F_{\mathsf X}$ the free group generated by  $\mathsf X$, and
 by $\pi_\Gamma\colon\mathbb F_{\mathsf X}\rightarrow\Gamma$ the unique epimorphism extending~$\varphi$. The group
$\Gamma$ has \emph{solvable Word Problem} (WP) if there exists an algorithm that for every  $\omega \in \mathbb F_{\mathsf X}$ as an input, stops and  establishes whether or not $\omega$ represents the identity in $\Gamma$ (i.e.\ $\pi_\Gamma(\omega)=1_\Gamma$). This is equivalent to saying that   $\ker \pi_\Gamma\subset \mathbb F_{\mathsf X} $ is \emph{recursive}.  We also say that $\Gamma$ is recursively (resp.\ finitely) presentable if there exists $R\subset \mathbb F_{\mathsf X}$ recursive (resp.\ finite), such that the normal closure   $R^{\mathbb F_{\mathsf X}}=\ker \pi_\Gamma$. Dehn in \cite{Dehn} first formulated the Word Problem, several years before the study about computability started. Only in the 1950s  \cite{Boone,Novikov} examples of finitely presented groups with unsolvable WP appeared. 

From a practical point of view, often in computer science it is not important the behaviour of an algorithm for the totality of the inputs, because it is possible that it is strongly influenced by a small, negligible, subset of inputs. Sometimes it is more interesting  to study the \emph{average} or the behaviour for \emph{most} of the inputs. 
This concept was developed even in group theory \cite{G1,G2,G3,AO}: we refer to   \cite{KMSS}  for an extensive discussion on the subject. In particular,  Kapovich, Miasnikov, Schupp and Shpilrain formally defined the concept of \emph{generic computability} and \emph{generic-case complexity}, especially focusing on algorithmic problems for finitely generated groups. We now present the \emph{ generic Equality Problem}.

Following \cite{Afinite}, we say that the Equality Problem (EP) is solvable on a subset $S\subset \mathbb F_{\mathsf X}$ if there exists an algorithm with input $(\omega_1, \omega_2) \in \mathbb F_{\mathsf X}\times \mathbb F_{\mathsf X}$, such that whenever $(\omega_1, \omega_2) \in S\times S$ the algorithm stops, establishing whether $\pi_\Gamma(\omega_1)=\pi_\Gamma(\omega_2)$ or not.
Notice that when $S$ is a subgroup, EP  is equivalent to the Word Problem for $S$.

Denoting by $B_n$ the ball of radius $n$ in $\mathbb F_{\mathsf X}$, a subset $S\subset  \mathbb F_{\mathsf X}$ is called \emph{generic} if \begin{equation}\label{ggg} \lim_{n\to\infty} \frac{|S\cap B_n|}{|B_n|}=1 ;\end{equation} a subset is \emph{negligible} if its complement is generic.
\begin{defii}
The group $\Gamma$ has \emph{solvable generic EP} if there exist a finite set of generators $X$ and a generic subset $S\subset \mathbb F_{\mathsf X}$ such that the EP is solvable on $S$.
\end{defii}

The dependence on the choice of the generating set $X\subset \Gamma$ in the above definition is, to our knowledge, presently unknown. Passing from classical computability problems to their generic version fails, in general, to preserve independence of the choice of the generating set. However, we believe that, in the present setting, this is not the case.

The transition to genericity makes solvable some classical unsolvable problems; the literature in this direction is very rich, starting from \cite{KMSS} to \cite{JS,DJS,KS,KKS}. But, not less important, especially for cryptography, is to produce examples \cite{Afinite,GMO,MR,JS} of problems generically hard or even generically undecidable. Up to now there were no examples of finitely presented groups with unsolvable generic  WP or unsolvable generic  EP. Here we provide examples of the latter by proving a sort of ``stability" for the Word Problem in recursively presented amenable groups:
\begin{theo}\label{B}
In the class of recursively presented amenable groups:
$$\mbox{ solvable WP } \Longleftrightarrow \mbox{solvable generic EP}$$
\end{theo}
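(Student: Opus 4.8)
The implication $\text{(solvable WP)}\Rightarrow\text{(solvable generic EP)}$ is immediate: if $\ker\pi_\Gamma$ is recursive, then $(\omega_1,\omega_2)\mapsto[\omega_1\omega_2^{-1}\in\ker\pi_\Gamma]$ solves EP on all of $\mathbb F_{\mathsf X}$, and a fortiori on the (trivially generic) subset $S=\mathbb F_{\mathsf X}$. So the content is the converse, and the plan is to prove it by showing that $\mathbb F_{\mathsf X}\setminus\ker\pi_\Gamma$ is recursively enumerable; since recursive presentability already gives that $\ker\pi_\Gamma$ is recursively enumerable, this makes $\ker\pi_\Gamma$ recursive, i.e.\ WP solvable. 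So fix a finite generating set $X$, a generic $S\subset\mathbb F_{\mathsf X}$ and an algorithm $\mathcal A$ solving EP on $S$; the goal is to semi-decide, from $\mathcal A$ and the recursive presentation, that a given $\omega$ does \emph{not} represent $1_\Gamma$.

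First I would attach to each $\omega\in\mathbb F_{\mathsf X}$ a ``detector'' $N_\omega\subset\mathbb F_{\mathsf X}$: the set of $v$ for which $\mathcal A(v,v\omega)$ halts and outputs ``$\pi_\Gamma(v)\neq\pi_\Gamma(v\omega)$''. The map $(\omega,v)\mapsto[v\in N_\omega]$ is recursively enumerable, and the combinatorial heart of the argument is the dichotomy
\[
\omega\in\ker\pi_\Gamma\ \Longrightarrow\ N_\omega\ \text{is negligible},\qquad
\omega\notin\ker\pi_\Gamma\ \Longrightarrow\ N_\omega\ \text{is generic}.
\]
To prove it, observe that $|B_{n+k}|\le|B_n|\cdot|B_k|$, so for fixed $\omega$ of length $k$ the set $S\omega^{-1}=\{v:v\omega\in S\}$ is again generic: its complement $(\mathbb F_{\mathsf X}\setminus S)\omega^{-1}$ meets $B_n$ only in images of $(\mathbb F_{\mathsf X}\setminus S)\cap B_{n+k}$, hence has density $\le |B_k|\cdot|(\mathbb F_{\mathsf X}\setminus S)\cap B_{n+k}|/|B_{n+k}|\to 0$. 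Therefore $G_\omega:=S\cap S\omega^{-1}$ is generic, and for $v\in G_\omega$ the pair $(v,v\omega)$ lies in $S\times S$, so $\mathcal A(v,v\omega)$ halts and correctly decides whether $\pi_\Gamma(\omega)=1_\Gamma$. Consequently $G_\omega\subseteq N_\omega$ when $\omega\notin\ker\pi_\Gamma$, and $G_\omega\cap N_\omega=\varnothing$ when $\omega\in\ker\pi_\Gamma$; since $\mathbb F_{\mathsf X}\setminus G_\omega$ is negligible, the dichotomy follows. In particular $\mathbb F_{\mathsf X}\setminus\ker\pi_\Gamma=\{\omega:N_\omega\ \text{is generic}\}$, and it remains only to recognise genericity of $N_\omega$ \emph{effectively}.

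This last step is where amenability must be used essentially — for a general recursively enumerable set, distinguishing ``generic'' from ``negligible'' is impossible, which is exactly why the analogous statement fails outside the amenable world. Here I would invoke the effective amenability already available: the uniform algorithm $\widehat{\mathfrak K}$ of Theorem~\ref{imp} and the subrecursivity of the F\o{}lner function of Theorem~\ref{A}. The plan is to run $\widehat{\mathfrak K}$ on the recursive presentation to compute explicit $n$‑invariant data, and then to convert the resulting quantitative control on F\o{}lner sets into a \emph{recursive} modulus $m\mapsto n_\omega(m)$ with the property that, in \emph{both} cases of the dichotomy, the density $|N_\omega\cap B_n|/|B_n|$ is within $1/m$ of its limit ($0$ or $1$) for every $n\ge n_\omega(m)$. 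Granting such a modulus the semi‑decision is routine: compute $n_\omega(3)$, enumerate $N_\omega$ until the observed density on $B_{n_\omega(3)}$ exceeds $1/2$ (which occurs iff $N_\omega$ is generic iff $\omega\notin\ker\pi_\Gamma$), and run this in parallel with the enumeration of $\ker\pi_\Gamma$; exactly one of the two processes halts, which decides WP. The main obstacle of the whole proof is precisely the construction of this recursive modulus — i.e.\ showing that the a priori uncontrolled rate at which $S$, and hence $G_\omega$ and $N_\omega$, fill the balls of $\mathbb F_{\mathsf X}$ is in fact governed by the effective amenability of $\Gamma$. I expect this to proceed through the interplay, already exploited in Theorem~\ref{imp}, between F\o{}lner functions, Reiter functions, and the cogrowth of $\ker\pi_\Gamma$ in $\mathbb F_{\mathsf X}$ (Grigorchuk's amenability criterion), made quantitative via the recursive bound on $F_{\Gamma,X}$. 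Finally, for the announced application: any finitely presented amenable group with unsolvable WP (for instance Kharlampovich's group) is then a finitely presented group with unsolvable generic EP, while its generic WP remains solvable.
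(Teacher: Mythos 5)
Your easy direction and your ``detector'' dichotomy are both correct: for fixed $\omega$ of length $k$ the set $S\omega^{-1}$ is indeed generic (the factor $|B_k|$ is a constant), so $G_\omega=S\cap S\omega^{-1}$ is generic, and hence $N_\omega$ is generic when $\omega\notin\ker\pi_\Gamma$ and negligible when $\omega\in\ker\pi_\Gamma$, with $(\omega,v)\mapsto[v\in N_\omega]$ recursively enumerable. The gap is the final step, which is exactly the point you defer: the ``recursive modulus'' $n_\omega(m)$ you need does not follow from (and cannot follow from) effective amenability. In both cases of your dichotomy the density of $N_\omega\cap B_n$ is squeezed against that of $G_\omega$, whose rate of convergence is governed by the rate at which the \emph{given} generic set $S$ fills the balls of $\mathbb F_{\mathsf X}$. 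That rate is part of the hypothesis ``$S$ is generic'' and is completely unrelated to $\Gamma$'s F\o lner function, Reiter functions or cogrowth: nothing prevents $S$ from being generic with convergence slower than every recursive function, while EP is still solvable on $S$. Running $\widehat{\mathfrak K}$ gives you recursive control over F\o lner data of $\Gamma$, but no control whatsoever over the density profile of $S$, $G_\omega$ or $N_\omega$; so the semi-decision ``$N_\omega$ is generic'' remains exactly as hard as it is for an arbitrary r.e.\ set, and your plan stalls at its acknowledged ``main obstacle''.

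The paper avoids needing any quantitative genericity. It uses genericity only through the qualitative fact (upper Banach genericity, Lemma \ref{genefol}) that every finite subset of $\mathbb F_{\mathsf X}$ has a right translate inside $S$; combined with amenability this shows $\pi_\Gamma(S)$ contains a F\o lner sequence (Lemma \ref{GF}). From the EP-algorithm on $S$ one builds a recursively enumerable family $\mathcal A$ of finite sets on which $\pi_\Gamma$ is injective and which hits preimages of these F\o lner sets (Lemma \ref{elenco}); then the subroutine $\mathfrak K(n)$ from Theorem \ref{imp} is run in parallel on the characteristic functions $\chi_{E_i}$, $E_i\in\mathcal A$. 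Its halting requires no modulus, only the \emph{existence} of a good $E_i$ in the enumeration, and when it halts it certifies $n$-invariance of the pushforward using only the recursive enumerability of $\ker\pi_\Gamma$. This yields one-to-one preimages of F\o lner sets computably, and Theorem \ref{WP} (via the sofic-approximation/Hamming-length argument) converts that into solvability of WP. So the decisive ideas you are missing are the translation lemma for generic sets and the verification-by-enumeration scheme of $\mathfrak K(n)$ together with Theorem \ref{WP}, which replace any attempt to detect genericity of an r.e.\ set by density estimates.
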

\begin{proof}Section \ref{GEP} is devoted to proving this theorem.
\end{proof}
To prove this, we use a variation of the algorithm $\widehat{\mathfrak K}$, and the following: in a recursively presented group computability, for every $n$, of a one-to-one preimage of an $n$-F\o lner set,  gives solvability of WP (Theorem \ref{WP}). Thus, more generally, solvability of EP on a set containing a preimage of a F\o lner sequence implies solvability of the WP.

As a byproduct, the following provides a solution to \cite[Problem 1.5, b]{Afinite} (we denote by $G(M)$ the Kharlampovich groups, see \cite{H,KMS}):
\begin{corol}
The finitely presented groups $G(M)$ have  unsolvable generic Equality Problem.
\end{corol}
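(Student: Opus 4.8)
The plan is to obtain the corollary as an immediate specialization of Theorem~\ref{B} to a known family of examples, once the hypotheses are checked. First I would recall the relevant properties of the Kharlampovich groups $G(M)$ from \cite{H,KMS}: for a suitable choice of the parameter $M$ (encoding a recursively enumerable, non-recursive set), $G(M)$ is a \emph{finitely presented} group which is moreover \emph{solvable} (of uniformly bounded derived length) and has \emph{unsolvable} Word Problem. This is precisely Kharlampovich's celebrated construction of a finitely presented solvable group with unsolvable WP.

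Next I would observe that a finitely presented group is a fortiori recursively presented, and that every solvable group is amenable, since the class of amenable groups contains all abelian groups and is closed under extensions. Hence $G(M)$ is a recursively presented amenable group, so it lies in the class to which Theorem~\ref{B} applies.

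Finally I would apply the contrapositive of the implication ``solvable generic EP $\Longrightarrow$ solvable WP'' furnished by Theorem~\ref{B}: since the Word Problem of $G(M)$ is unsolvable, its generic Equality Problem is unsolvable as well. Since solvability of generic EP is defined by quantifying over all finite generating sets of the group, this conclusion is independent of the generating set, and it resolves \cite[Problem 1.5, b]{Afinite}.

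There is no real obstacle here: the content is entirely carried by Theorem~\ref{B}, and the only thing to be done is the bookkeeping of verifying that $G(M)$ meets its hypotheses verbatim — ``recursively presented'' being automatic from finite presentability, and ``amenable'' being the elementary observation that solvable groups are amenable. I would merely take care to cite the precise statement of Kharlampovich's result that yields, for an appropriate $M$, a finitely presented solvable group with unsolvable WP.
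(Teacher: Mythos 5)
Your proposal is correct and follows exactly the paper's argument: $G(M)$ is finitely presented (hence recursively presented), solvable (hence amenable), and has unsolvable Word Problem by Kharlampovich's construction, so the contrapositive of Theorem~\ref{B} gives unsolvable generic EP. One small caveat: solvability of generic EP is defined \emph{existentially} over generating sets (there exists $X$ and a generic $S\subset\mathbb F_{\mathsf X}$), so your remark should read that \emph{unsolvability} is the universal statement over all generating sets, which is indeed what the contrapositive delivers.
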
 
Indeed, the groups $G(M)$ are finitely presented, solvable and therefore amenable, and have unsolvable Word Problem (\cite{H,KMS}).

Note that in \cite{KMSS} (linear) solvability of the generic Word Problem for solvable groups  is proved. Thus even if the Equality Problem is the natural generalization of the Word Problem, however the generic EP is different from the generic WP.

Let $\mathcal C_A$ denote the class of recursively presented amenable groups and consider the following  subclasses: $\mathcal C_{WP}$ (with solvable WP), $\mathcal C_{CF}$ (with computable F\o lner sets), $\mathcal C_{CFI}$ (with computable F\o lner sets by one-to-one preimages), $\mathcal C_{CR}$ (with computable Reiter functions), $\mathcal C_{SF}$ (with subrecursive F\o lner function) (see next section for the definitions). \\The following theorem summarizes the current understanding about the relations among these se\-veral notions of effective amenability.
\begin{theo}\label{C}
$$\mathcal C_{CFI}=\mathcal C_{WP}\subsetneq \mathcal C_{CF} \subset \mathcal C_{SF}=\mathcal C_{CR}=\mathcal C_A$$\end{theo}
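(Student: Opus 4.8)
The plan is to verify the displayed string one link at a time, invoking Theorem~\ref{imp} and Theorems~\ref{WP}, \ref{A} for the substantial steps and giving short direct arguments for the rest. Note first that each of $\mathcal C_{WP},\mathcal C_{CF},\mathcal C_{CFI},\mathcal C_{CR},\mathcal C_{SF}$ is by definition contained in $\mathcal C_A$, so the equalities $\mathcal C_{SF}=\mathcal C_{CR}=\mathcal C_A$ reduce to proving $\mathcal C_A\subseteq\mathcal C_{SF}$ and $\mathcal C_A\subseteq\mathcal C_{CR}$. The first is exactly the first assertion of Theorem~\ref{A}. For the second I would run the uniform algorithm $\widehat{\mathfrak K}$ of Theorem~\ref{imp} on the given recursive presentation with successive inputs $n$: amenability guarantees that the function on $\mathbb F_{\mathsf X}$ it searches for exists, so $\widehat{\mathfrak K}$ halts and returns one whose pushforward to $\Gamma$ is $n$-invariant; recording the support sizes and denominators of these outputs gives a computable Reiter function for $\Gamma$, so $\Gamma\in\mathcal C_{CR}$.

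Next, $\mathcal C_{CF}\subseteq\mathcal C_{SF}$: if for every $n$ an algorithm outputs a finite list $W_n\subset\mathbb F_{\mathsf X}$ with $\pi_\Gamma(W_n)$ an $n$-F\o lner set, then $F_{\Gamma,X}(n)\le|\pi_\Gamma(W_n)|\le|W_n|$ with $n\mapsto|W_n|$ recursive, so the F\o lner function is subrecursive (and $\Gamma\in\mathcal C_A$ automatically).

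For the block $\mathcal C_{CFI}=\mathcal C_{WP}$: the inclusion $\mathcal C_{CFI}\subseteq\mathcal C_{WP}$ is Theorem~\ref{WP}. For the converse, solvability of WP lets us decide equality modulo $\ker\pi_\Gamma$, so I would enumerate finite tuples $(\omega_1,\dots,\omega_k)$ in $\mathbb F_{\mathsf X}$ whose images in $\Gamma$ are pairwise distinct, compute the exact values $|\pi_\Gamma(\{\omega_i\})|=k$ and $|\pi_\Gamma(\{\omega_i\})\setminus x\,\pi_\Gamma(\{\omega_i\})|$ for $x\in X$ (again using WP), and halt as soon as \eqref{folner} holds; amenability forces termination and the output is a one-to-one preimage of an $n$-F\o lner set. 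Hence $\mathcal C_{WP}\subseteq\mathcal C_{CFI}$, and since $\mathcal C_{CFI}\subseteq\mathcal C_{CF}$ trivially, also $\mathcal C_{WP}\subseteq\mathcal C_{CF}$.

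What remains, and what I expect to be the main obstacle, is the strictness $\mathcal C_{WP}\subsetneq\mathcal C_{CF}$: one must exhibit a recursively presented amenable group $\Gamma$ with unsolvable Word Problem that nonetheless lies in $\mathcal C_{CF}$ — so that an $n$-F\o lner set can be written down as a list of words in $\mathbb F_{\mathsf X}$ even though, by Theorem~\ref{WP}, no such list can be made one-to-one. The natural approach is to build $\Gamma$ (e.g.\ as a suitable extension or wreath-type product) so that a F\o lner sequence is governed by a ``skeleton'' — a quotient or subgroup with solvable WP and transparently computable F\o lner sets — while the undecidability is hidden in a normal part that does not disturb those F\o lner sets, and then to check directly that the explicitly written candidate sets satisfy \eqref{folner} in $\Gamma$ irrespective of the identifications that cannot be decided. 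Arranging a single $\Gamma$ that is simultaneously recursively presented, amenable, with unsolvable WP, and with computable F\o lner lists is the crux; granting such an example, every (in)equality in the statement follows from the steps above.
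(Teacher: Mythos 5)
Most of your chain coincides with the paper's own argument: the block $\mathcal C_{SF}=\mathcal C_{CR}=\mathcal C_A$ is exactly Theorem \ref{imp} (equivalently the first part of Theorem \ref{A}), both directions of $\mathcal C_{CFI}=\mathcal C_{WP}$ are the two implications of Theorem \ref{WP} (your re-derivation of $\mathcal C_{WP}\subseteq\mathcal C_{CFI}$ by checking \eqref{folner} with the WP-oracle is precisely the paper's proof of $(i)\implies(ii)$ there), and your inequality $F_{\Gamma,X}(n)\le|\pi_\Gamma(W_n)|\le|W_n|$ correctly gives $\mathcal C_{CF}\subseteq\mathcal C_{SF}$. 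Up to this point the proposal is sound and essentially the same route as the paper.

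The genuine gap is the strictness $\mathcal C_{WP}\subsetneq\mathcal C_{CF}$, which you yourself flag as the ``crux'' and leave unproved: no group is actually produced, and the heuristic (``a skeleton with solvable WP governing the F\o lner sets, undecidability hidden in a normal part that does not disturb them'') is exactly the part that requires work, since one must verify \eqref{folner} for explicitly listed words in a group where equality cannot be decided. The paper does not reprove this either; it invokes \cite{CAV,Pre}, where it is shown that the Kharlampovich groups $G(M)$ have computable F\o lner sets. Since $G(M)$ is finitely presented and solvable --- hence recursively presented and amenable --- and has unsolvable Word Problem \cite{H,KMS}, it witnesses the strict inclusion (and, consistently with your remark, by Theorem \ref{WP} the computed preimages for $G(M)$ can never be made one-to-one). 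Without citing such an example or carrying out a construction of this kind, your argument only establishes the statement with $\subsetneq$ weakened to $\subseteq$; supplying $G(M)$ together with the computability result of \cite{CAV,Pre} closes the gap.
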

\begin{proof}
The first equality is Theorem \ref{WP}, the other equalities follow from Theorem \ref{imp}, the re\-maining relations were already proved in \cite{CAV,Pre}.
\end{proof}
\noindent Whether or not the inclusion $\mathcal C_{CF} \subset \mathcal C_{SF}$ is strict is an open question.

\vspace{16mm}

\begin{center}
The paper is organized as follows.
\begin{itemize}
\item[Section \ref{prel}]
We introduce notation and the definitions of  \emph{computable F\o lner sets} and \emph{computable Reiter functions}. We present some basic properties, fundamental for all the next sections.
\item[Section \ref{reisec}]
We prove that every amenable recursively presented group has computable Reiter functions and therefore has subrecursive F\o lner function, equivalently, $\mathcal C_{SF}=\mathcal C_{CR}=\mathcal C_A$ (Theorem~\ref{imp}). We analyze the existence of uniform recursive upper bounds for the F\o lner functions of  recursively presented amenable groups (Corollary \ref{c1}, \ref{c2}, \ref{c3}). Theorem \ref{imp} and its proof are fundamental for  Section~\ref{GEP}.
\item[Section \ref{swp}]
In the class of amenable recursively presented groups, we characterize those groups with solvable WP as the groups with computable F\o lner sets by one-to-one preimages, equivalently, we show $\mathcal C_{CFI}=\mathcal C_{WP}$ (Theorem \ref{WP}). Moreover, in Corollary \ref{ovvio}, we easily show that, in case of solvability of WP, all definitions of effective amenability are equivalent. Theorem~\ref{WP} is fundamental for Section \ref{GEP}.
\item[Section \ref{GEP}]
We prove that a recursively presented amenable group with solvable generic EP has sol\-va\-ble WP (Theorem \ref{uno}). The proof uses the algorithm described in the proof of Theorem \ref{imp} and the characterization of the WP given by Theorem \ref{WP}.
\item[Section \ref{QR}]
Questions and final remarks.
\end{itemize}
\end{center}

\subsection*{Acknowledgement}
This work was  supported by a grant of the Romanian National Authority for Scientific Research and
Innovation, CNCS - UEFISCDI, project number PN-II-RU-TE-2014-4-0669. I thank Tullio Ceccherini-Silberstein for the long and precious discussions, and the anonymous referees: a first one who suggested me to investigate the notion of computability of Reiter functions and to strengthen the notion of computability of F\o lner sets (this suggestion turned very precious for my further development in computability theory), and a second one for the careful reading and the precious advices on the organization of the presentation of the paper.

\section{Preliminaries}\label{prel}
Throughout this paper, $\Gamma$ is a group generated by a finite set $X$. We fix a set $\mathsf X$ and a bijection $\varphi\colon \mathsf X\to X$, and denote by  $\pi_\Gamma\colon\mathbb F_{\mathsf X}\rightarrow\Gamma$ the unique epimorphism extending $\varphi$, where $\mathbb F_{\mathsf X}$ is the free group based on $\mathsf X$. For $x\in X$ we set $\mathsf x:=\varphi^{-1}(x)\in \mathsf X$: we believe that this use of different fonts, avoiding possible ambiguities, considerably simplifies notation. Given an element $\omega$ in the free group $\mathbb F_{\mathsf X}$ we denote by $|\omega|$ the natural word length of $\omega$ with respect to  $\mathsf X \cup \mathsf X^{-1}$; we denote by  $B_n:=\{\omega\in\mathbb F_{\mathsf X}:\; |\omega|\leq n\}$ the ball of radius $n$ and by ${S_n:= B_n\setminus B_{n-1}\subset\mathbb F_{\mathsf X},}$ the sphere of radius $n$.
 For a natural number $k$, we denote by ${[k]:=\{1,2,\ldots,k\}}$, and recall that $\mathfrak F\o l_{\Gamma,X}(n)$ is the family of $n$-F\o lner sets of $\Gamma$ with respect to $X$. The function $\chi_A$ is the characteristic function of the subset $A$ (both for $A\subset \Gamma$ or $A\subset \mathbb F_{\mathsf X}$).

\begin{definition}
A summable non-zero function $h\colon \Gamma \to \mathbb R^+$, $\|h\|_{1,\Gamma}:=\sum_{g\in\Gamma} |h(g)|<\infty $, is \emph{$n$-invariant} with respect to $X$ if for all $x\in X$ 
\begin{equation}\label{nin}\frac{\|h-_x\!h\|_{1,\Gamma}}{\|h\|_{1,\Gamma}}\leq n^{-1};\end{equation}
 where  $_xh \colon \Gamma \to \mathbb R^+$ is the function defined by $_xh(g):=h(x^{-1}g)$.\\
We denote by $\mathfrak Reit_{\Gamma,X}(n)$ (from the \emph{Reiter condition} for amenability \cite{rei}) the set of all summable non-zero  functions from $\Gamma$ to $\mathbb R^+$ that are $n$-invariant  with respect to $X$.
\end{definition}
\begin{remark}\label{RF} The following facts are well known and/or easy to prove (see \cite{tullio,Coo})
\begin{itemize}
\item
$\Omega \in \mathfrak F\o l_{\Gamma,X}(n) \implies  \Omega g \in \mathfrak F\o l_{\Gamma,X}(n), \; \forall g\in \Gamma;$
\item
$\Omega \in \mathfrak F\o l_{\Gamma,X}(n) \implies \frac{|\Omega\setminus x^{-1} \Omega|}{|\Omega|}\leq \frac{1}{n},\; \forall x\in X;$
\item
$\Omega\in \mathfrak F\o l_{\Gamma,X}(n) \Leftrightarrow \frac{|\Omega\cap x \Omega|}{|\Omega|}\geq 1-\frac{1}{n},\; \forall x\in X;$

\item
$\Omega\in \mathfrak F\o l_{\Gamma,X}(2n)\: \Leftrightarrow \chi_\Omega\in \mathfrak Reit_{\Gamma,X}(n),$\\
since $\frac{\|\chi_\Omega-_x\!\chi_\Omega\|_{1,\Gamma}}{\|\chi_\Omega\|_{1,\Gamma}}= 
\frac{\|\chi_\Omega-\!\chi_{x\Omega}\|_{1,\Gamma}}{\|\chi_\Omega\|_{1,\Gamma}}=2 \frac{|\Omega\setminus x \Omega|}{|\Omega|};$

\item
$h\in \mathfrak Reit_{\Gamma,X}(n) \implies \exists \Omega\subset Supp(h):=\{ g\in \Gamma:\; h(g)\neq 0\},\; \Omega\in \mathfrak F\o l_{\Gamma,X}(n)$,\\
precisely, by the so-called layer cake decomposition, or Namioka's trick, there exists ${\epsilon\in \mathbb R^+}$ such that $\{g\in \Gamma:\; h(g)>\epsilon\}\in \mathfrak F\o l_{\Gamma,X}(n);$
\end{itemize}

\end{remark}
Thus $\Gamma$ is amenable if and only if $\mathfrak Reit_{\Gamma,X}(n)\neq \emptyset$ for every $n\in\mathbb N$  or, equivalently, ${\mathfrak F\o l_{\Gamma,X}(n)\neq \emptyset}$ for every $n\in\mathbb N$. 
In order to define a notion of effective amenability for $\Gamma$ we require the existence of an algorithm computing, in some sense, either F\o lner sets or  Reiter functions.
Since in general $\Gamma$ has unsolvable Word Problem we ``lift" the output to $\mathbb F_{\mathsf X}$.
The following notion was introduced and studied in \cite{CAV,Pre}:
\begin{definition}
$\Gamma$ has \emph{computable F\o lner sets}  if there exists an algorithm with:\\
INPUT: $n\in \mathbb N$\\
OUTPUT: $F \subset \mathbb F_X$ finite, such that $\pi_{\Gamma}(F)\in\mathfrak F\o l_{\Gamma,X}(n)$.
\end{definition}
The computability of F\o lner sets does not depend on the choice of the  finite set of generators and, in particular, for finitely presented groups, if we change a given finite presentation we can algorithmically update the algorithm.\\
The following is the analogue definition for the Reiter condition:
\begin{definition}\label{rei}
$\Gamma$ has \emph{computable Reiter functions} with respect to $X$ if there exists an algorithm with
\\
INPUT: $n\in \mathbb N$\\
OUTPUT: $f\colon \mathbb F_{\mathsf X}\to \mathbb Q^+$, finitely supported, such that ${\pi_{\Gamma}}_*(f)\in \mathfrak Reit_{\Gamma,X}(n)$,
\\where ${\pi_{\Gamma}}_*(f)\colon \Gamma \to \mathbb Q^+$
 is the \emph{pushforward} of $f$, defined by ${\pi_{\Gamma}}_*(f)(g):=\sum_{\nu\in \pi_{\Gamma}^{-1}(g)}f(\nu)$. %$\forall g\in \Gamma$.
\end{definition}

\begin{remark}
Consider the commutative diagram of group epimorphisms:

\[
  \begin{tikzcd}
    G_1 \arrow{r}{\pi_1} \arrow[swap]{dr}{\pi_3} & G_2 \arrow{d}{\pi_2} \\ & G_3
  \end{tikzcd}
\]
and $f \colon G_1 \to \mathbb R.$ Then the following holds:
\begin{itemize}
\item
${\pi_2}_*({\pi_1}_*(f))={\pi_3}_*(f)$ and if $f$ is finitely supported then ${\pi_1}_*(f)\colon G_2\to \mathbb R$ is finitely supported; \\
as a consequence, the definition of computability of Reiter functions does not depend on the choice of the finite set of generators;
\item
${\pi_1}_*(_gf)=\,_{\pi_1(g)}\!\, {\pi_1}_*(f),\; \forall g\in G_1$;
\item
$\|f\|_{1,G_1}\geq \|{\pi_1}_*(f)\|_{1,G_2}\geq \|{\pi_3}_*(f)\|_{1,G_3}$, and, if $f$ is positive, equalities hold;
\item
 ${\pi_1}_*(f) \in \mathfrak Reit_{G_2}(n)\implies {\pi_3}_*(f)\in \mathfrak Reit_{G_3}(n)$,\\
thus computability of Reiter functions passes to  quotients.

\end{itemize}
\end{remark}

\section{Recursive bounds for F\o lner functions}\label{reisec}

\begin{theorem}\label{imp}
Suppose that $\Gamma$ is recursively presentable. Then the following are equivalent:
\begin{itemize}
\item[(i)]
$\Gamma$ is amenable;
\item[(ii)]
$\Gamma$ has subrecursive F\o lner function;
\item[(iii)]
there exists an algorithm with \\
INPUT: $n\in \mathbb N$\\
OUTPUT: $F \subset \mathbb F_{\mathsf X}$ finite, such that $\pi_{\Gamma}(F)$ contains an $n$-F\o lner set;
\item[(iv)]
$\Gamma$ has computable Reiter functions.
\end{itemize}
\end{theorem}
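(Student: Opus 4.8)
The plan is to prove the cycle $(\mathrm{i})\Rightarrow(\mathrm{iv})\Rightarrow(\mathrm{iii})\Rightarrow(\mathrm{ii})\Rightarrow(\mathrm{i})$, of which only the first implication requires genuine work. The implication $(\mathrm{ii})\Rightarrow(\mathrm{i})$ is immediate: a recursive upper bound for $F_{\Gamma,X}$ is by definition a total function $\mathbb N\to\mathbb N$, so $F_{\Gamma,X}(n)<\infty$, hence $\mathfrak F\o l_{\Gamma,X}(n)\neq\emptyset$, for every $n$. For $(\mathrm{iii})\Rightarrow(\mathrm{ii})$, feeding $n$ to the algorithm of $(\mathrm{iii})$ returns a finite $F\subset\mathbb F_{\mathsf X}$ with $\pi_\Gamma(F)\supseteq\Omega$ for some $\Omega\in\mathfrak F\o l_{\Gamma,X}(n)$, so $F_{\Gamma,X}(n)\le|\Omega|\le|F|$, and the function sending $n$ to the cardinality of this output is recursive. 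For $(\mathrm{iv})\Rightarrow(\mathrm{iii})$, if the algorithm of $(\mathrm{iv})$ returns on input $n$ a finitely supported $f\colon\mathbb F_{\mathsf X}\to\mathbb Q^+$ with ${\pi_\Gamma}_*(f)\in\mathfrak Reit_{\Gamma,X}(n)$, one outputs $F:=\mathrm{Supp}(f)$; since $f\ge 0$ we have $\pi_\Gamma(F)=\mathrm{Supp}({\pi_\Gamma}_*(f))$, which by the last item of Remark~\ref{RF} contains an $n$-F\o lner set.

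The core is $(\mathrm{i})\Rightarrow(\mathrm{iv})$, that is, the construction of the algorithm $\widehat{\mathfrak K}$. Since $\Gamma$ is recursively presentable, $\ker\pi_\Gamma=R^{\mathbb F_{\mathsf X}}$ with $R$ recursive, so $\ker\pi_\Gamma$ is recursively enumerable; fix an enumeration of it. On input $n$, the algorithm dovetails over all triples $(F,f,t)$ with $F\subset\mathbb F_{\mathsf X}$ finite, $f\colon F\to\mathbb Q^+$, and $t\in\mathbb N$. For each such triple it runs $t$ steps of the enumeration of $\ker\pi_\Gamma$, thereby certifying some of the equalities $\pi_\Gamma(\omega)=\pi_\Gamma(\omega')$ between elements of the finite set $F\cup\bigcup_{x\in X}\mathsf xF$; let $\sim_t$ be the equivalence relation they generate on that set. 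For each $x\in X$ it forms the estimated defect
\[
 d_x(f,t):=\sum_{D}\Bigl|\,\sum_{\omega\in F\cap D}f(\omega)-\sum_{\omega\in F,\,\mathsf x\omega\in D}f(\omega)\,\Bigr|,
\]
$D$ ranging over the $\sim_t$-classes, and it halts and outputs $f$ as soon as it meets a triple with $d_x(f,t)\le n^{-1}\sum_{\omega\in F}f(\omega)$ for every $x\in X$.

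Correctness has two halves. \emph{Soundness}: each certified equality is genuine, so $\sim_t$ refines the true $\pi_\Gamma$-equality relation; grouping the $\sim_t$-classes inside each genuine $\pi_\Gamma$-class and applying the triangle inequality gives $d_x(f,t)\ge\|{\pi_\Gamma}_*(f)-{}_x{\pi_\Gamma}_*(f)\|_{1,\Gamma}$, while $\sum_{\omega\in F}f(\omega)=\|{\pi_\Gamma}_*(f)\|_{1,\Gamma}$ exactly because $f\ge 0$; hence whenever the algorithm halts, its output $f$ genuinely satisfies ${\pi_\Gamma}_*(f)\in\mathfrak Reit_{\Gamma,X}(n)$, as required. \emph{Termination}: amenability gives some $\Omega\in\mathfrak F\o l_{\Gamma,X}(2n)$, and choosing one $\pi_\Gamma$-preimage of each element of $\Omega$ yields a finitely supported $f_0\colon\mathbb F_{\mathsf X}\to\mathbb Q^+$, supported on a finite $F_0$, with ${\pi_\Gamma}_*(f_0)=\chi_\Omega\in\mathfrak Reit_{\Gamma,X}(n)$ by Remark~\ref{RF}; the search, being exhaustive over all finitely supported rational $f$, eventually considers $f_0$, and since $F_0\cup\bigcup_{x\in X}\mathsf xF_0$ is finite, only finitely many genuine equalities occur among its elements, each certified after finitely many steps, so for all large $t$ the relation $\sim_t$ restricted to that set is the true one, whence $d_x(f_0,t)=\|{\pi_\Gamma}_*(f_0)-{}_x{\pi_\Gamma}_*(f_0)\|_{1,\Gamma}\le n^{-1}\|{\pi_\Gamma}_*(f_0)\|_{1,\Gamma}$ and the algorithm halts. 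The step I expect to be the main obstacle is exactly this soundness point: $n$-invariance of ${\pi_\Gamma}_*(f)$ cannot be decided directly, as $\Gamma$ may have unsolvable Word Problem, and the crucial observation is that the partial, purely positive information about equalities in $\Gamma$ extracted from a finite portion of an enumeration of $\ker\pi_\Gamma$ can only \emph{over}-estimate the $\ell^1$-defect, so an estimate confirmed below $n^{-1}$ is a sound certificate, while a false one can never be produced and a correct one is guaranteed by amenability.
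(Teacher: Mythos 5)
Your proposal is correct and takes essentially the same route as the paper: the easy implications are handled identically, and $(\mathrm{i})\Rightarrow(\mathrm{iv})$ is proved by the same mechanism of enumerating $\ker\pi_\Gamma$, maintaining a computable partition-based over-estimate of the normalized $\ell^1$-defect of the pushforward that is sound by the triangle inequality, dovetailing over candidate finitely supported rational functions, and using as termination witness a one-to-one preimage of a $2n$-F\o lner set whose pushforward is the characteristic function of that set. The only difference is bookkeeping, and it is in your favor: your estimate $d_x(f,t)$ is taken over equivalence classes of $F\cup\bigcup_{\mathsf x\in\mathsf X}\mathsf xF$, so it also charges the mass of ${}_x{\pi_\Gamma}_*(f)$ lying over elements not matched with anything in $F$, whereas the paper's $M^{\mathsf x}_{\mathcal Q}(f)$ is indexed by partitions of $F$ alone, so your version of the upper bound is the more carefully stated one.
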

\begin{proof}
It is clear that $(iii)\implies (ii)\implies (i)$; \\
$(iv)\implies (iii)$ \\
For every $n\in \mathbb N$ the output of the algorithm  in Definition \ref{rei} is  a function $f\colon \mathbb F_{\mathsf X}\to \mathbb Q^+$ with finite support, say
$F \subset \mathbb F_{\mathsf X}$. 
Let $h:={\pi_{\Gamma}}_*(f)$
 be the pushforward of $f$, so that $h\in \mathfrak Reit_{\Gamma,X}(n)$. Then, as mentioned in Remark \ref{RF},
there exists $\epsilon\in\mathbb R^+$ such that 
$\Omega_\epsilon:=\{g\in \Gamma: h(g)>\epsilon\}\in \mathfrak F\o l_{\Gamma,X}(n)$. 
We complete by observing that $\Omega_\epsilon\subset \pi_\Gamma(F)$.\\
$(i)\implies (iv)$ \\
The first step is to write, fixing $n\in\mathbb N$, a subroutine  $\mathfrak K(n)$  that, taken a function $f\colon \mathbb F_{\mathsf X}\to \mathbb Q^+$ with finite support $F\subset \mathbb F_{\mathsf X}$, stops if ${\pi_{\Gamma}}_*(f)\in \mathfrak Reit_{\Gamma,X}(n)$.
In fact, even if we cannot compute the pushforward (because we have no assumptions on WP), we can estimate the $n$-invariance after the following arguments.

With every partition $\mathcal Q$ of the finite support $F$ we associate the positive rational numbers $$M^{\mathsf x}_{\mathcal Q}(f):=\frac{\sum_{V\in \mathcal Q}|\sum_{\nu\in V}(f(\nu)-f(\mathsf x^{-1}\nu))|}{\sum_{\nu\in F}f(\nu)},\;  \mathsf x\in \mathsf X.$$ Denoting by $\mathcal P$ the canonical partition of $F$ associated with $\pi_\Gamma$ ($\forall \nu_1,\nu_2\in F$ there exists $V\in \mathcal P$ such that $\nu_1,\nu_2\in V$ if and only if $\pi_\Gamma(\nu_1)=\pi_\Gamma(\nu_2))$, we have

\begin{equation}\label{est}
\frac{\|{\pi_{\Gamma}}_*(f)-_x\!\!{\pi_{\Gamma}}_*(f) \|_{1,\Gamma}}{\|{\pi_{\Gamma}}_*(f)\|_{1,\Gamma}}=M^{\mathsf x}_{\mathcal P}(f),\; \forall \mathsf x\in \mathsf X.
\end{equation}
By the triangle inequality, for any two partitions $\mathcal Q$  and $\mathcal Q'$ of $F$ if  ${\mathcal Q \leq  \mathcal Q'}$ then $M^{\mathsf x}_{\mathcal Q}(f)\geq M^{\mathsf x}_{\mathcal Q'}(f)$.
In particular for any partition $\mathcal P'$ of $F$ such that $\mathcal P'\leq \mathcal P$, or equivalently, such that ${\nu_1,\nu_2\in V\in \mathcal P'}\implies \pi_\Gamma(\nu_1)=\pi_\Gamma(\nu_2)$, using equation \eqref{est} we have

\begin{equation}\label{est1}
\frac{\|{\pi_{\Gamma}}_*(f)-_x{\pi_{\Gamma}}_*(f) \|_{1,\Gamma}}{\|{\pi_{\Gamma}}_*(f)\|_{1,\Gamma}}\leq M^{\mathsf x}_{\mathcal P'}(f),\; \forall \mathsf x\in \mathsf X.
\end{equation}
So we define  $\mathfrak K(n)$ as follows: with input $f$, it sets $\mathcal P_0:= \{\{f\}: f\in F\}$, the finest partition of $F$.
As $\Gamma$ is recursively presented,  there is a recursive enumeration $\eta_1, \eta_2,\ldots$ of the words in $\ker \pi_\Gamma$.
When $\mathfrak K(n)$ reads $\eta_m$, for every pair of  distinct $V_1, V_2\in \mathcal P_{m-1}$ such that $\eta_m\in V_1 V_2^{-1}$, 
 it merges $V_1$ and $V_2$, defining a new partition $\mathcal P_{m}$; then it computes $M^{\mathsf x}_{\mathcal P_m}(f)$ and, if $M^{\mathsf x}_{\mathcal P_m}(f)\leq n^{-1}$ for every $\mathsf x\in \mathsf X$, it stops, if not, it goes to the next trivial word $\eta_{m+1}$.

By construction $\mathcal P_m\leq \mathcal P$ and the inequality \eqref{est1} holds (with $\mathcal P'=\mathcal P_m$);
 thus, when $\mathfrak K(n)$ stops, $M^{\mathsf x}_{\mathcal P_m}(f)\leq n^{-1}$ for every $\mathsf x\in \mathsf X$, and therefore ${\pi_{\Gamma}}_*(f)$ is $n$-invariant.
Conversely, if ${\pi_{\Gamma}}_*(f)$ is $n$-invariant, at latest when $\mathcal P_m=\mathcal P$ we have $M^{\mathsf x}_{\mathcal P_m}(f)\leq n^{-1}$, for any $\mathsf x\in\mathsf X$, by equality \eqref{est}.

Now, using hypothesis (i), for every $n\in \mathbb N$ there exists a non-empty finite subset $F\in \mathbb F_{\mathsf X}$ such that  $\pi_{\Gamma}(F)\in \mathfrak F\o l_{\Gamma,X}(2n)$ and $|F|=|\pi_\Gamma(F)|$: the pushforward of the characteristic function $\chi_F$ of $F$ is the characteristic function $\chi_{\pi_\Gamma(F)}\in \mathfrak Reit_{\Gamma,X}(n)$, by Remark \ref{RF}. We list all finite subsets of $\mathbb F_{\mathsf X}$:  $F_1,F_2,\ldots$ (they are countably many) and  we  simultaneously run $\mathfrak K(n)$ on $\chi_{F_1}$,  $\chi_{F_2} \ldots$ until one of the subroutines stops, providing a function with $n$-invariant pushforward (the sought Reiter funtion).
\end{proof}

\begin{remark}
In general,  the algorithm $\mathfrak K(n)$ may stop also with a function $\chi_F$ whose pushforward is not a characteristic function in $\Gamma$. This obstruction to reach $n$-F\o lner sets cannot be avoided because if we could change  $\mathfrak K(n)$ in order to stop only when ${\pi_{\Gamma}}_*(\chi_F)$ is characteristic, this would imply that $\Gamma$ has solvable Word Problem (this is a consequence of Theorem \ref{WP} that we will see in the next section). This is in general impossible, even for finitely presented groups with subrecursive F\o lner function.

The question --whether we can obtain computability of F\o lner sets (i.e.\ of a preimage not ne\-ces\-sarily 1-1)  with a  similar algorithm-- remains open: actually, we can estimate better and better $|\pi_\Gamma(F)\setminus x\pi_\Gamma(F)|$ from above listing the elements in $\ker \pi_\Gamma$, but in this case the denominator $| \pi_\Gamma(F)| $ is not computable and, at least for a general set, it  is impossible to estimate from below its cardi\-na\-li\-ty without solvability of the Word Problem. The same issue appears for stability of computability of F\o lner sets under quotients, see \cite{Pre}.
\end{remark}
Consider an enumeration $(P_i)_{i\in\mathbb N}$ of all finitely generated recursive presentations, $P_i=\{\mathsf  X_i| \mathsf R_i\}$, $\Gamma_i:=\mathbb  F_{\mathsf X_i}\slash  \mathsf R_i^{\mathbb F_{\mathsf X_i}}$. Clearly, we can extend $\mathfrak K$ to the universal algorithm $\widehat{\mathfrak K}$, that taking as an input $n$ and a presentation $P_i$, runs as $\mathfrak K(n)$ on $\mathbb  F_{\mathsf X_i}$, using only the recursive set of relations $\mathsf R_i$, and stops if the group $\Gamma_i$ admits $n$-F\o lner sets with respect to $X_i$.

Recall (cf. \cite{Mal}) that a \emph{partially recursive, $k$-place function} is a function $\mathcal U\colon D_{\mathcal U}\to \mathbb N$, where $D_{\mathcal U}\subset\mathbb N^k$, such that there exists an algorithm that for every input $(n_1,n_2,\ldots,n_k)\in  D_{\mathcal U}$ stops and gives $\mathcal U(n_1,n_2,\ldots,n_k)$ as an  output.

\begin{corollary}\label{c1}
There exists a $2$-place  partial recursive function $\mathcal U$  such that $$F_{\Gamma_i,\mathsf X_i}(n)\leq \mathcal U(i,n)$$ on the domain $\{(i,n)\in\mathbb N^2:\;F_{\Gamma_i,\mathsf X_i}(n)< \infty \}.$
\end{corollary}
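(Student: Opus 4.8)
The plan is to obtain $\mathcal U$ as (the cardinality of the output of) the universal algorithm $\widehat{\mathfrak K}$ constructed right after Theorem~\ref{imp}. On an input $(i,n)\in\mathbb N^2$ one runs $\widehat{\mathfrak K}$ on the pair $(n,P_i)$. This is a single algorithm, uniform in the index $i$: it only inspects the recursive set of relators $\mathsf R_i$, through which it recursively enumerates $\ker\pi_{\Gamma_i}=\mathsf R_i^{\mathbb F_{\mathsf X_i}}$, and runs (as $\mathfrak K(n)$) over an exhaustive list of finite subsets of $\mathbb F_{\mathsf X_i}$. By its construction it halts on $(n,P_i)$ exactly when $\Gamma_i$ admits an $n$-F\o lner set with respect to $\mathsf X_i$, that is, exactly when $F_{\Gamma_i,\mathsf X_i}(n)<\infty$; and upon halting one extracts (as in the implication $(iv)\implies(iii)$ of Theorem~\ref{imp}) a finite set $F\subset\mathbb F_{\mathsf X_i}$ such that $\pi_{\Gamma_i}(F)$ contains an $n$-F\o lner set. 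I then set $\mathcal U(i,n):=|F|$.

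The verification is short. The procedure just described computes a function $\mathcal U$ and halts on exactly the pairs $(i,n)$ with $F_{\Gamma_i,\mathsf X_i}(n)<\infty$, so $\mathcal U$ is a $2$-place partial recursive function with precisely this domain. Moreover, if $\Omega\subset\pi_{\Gamma_i}(F)$ is the $n$-F\o lner set exhibited by the halting run, then
\[
  F_{\Gamma_i,\mathsf X_i}(n)=\min\{\,|\Omega'| : \Omega'\in\mathfrak F\o l_{\Gamma_i,\mathsf X_i}(n)\,\}\le|\Omega|\le|\pi_{\Gamma_i}(F)|\le|F|=\mathcal U(i,n),
\]
which is the required inequality on the stated domain.

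Accordingly, Corollary~\ref{c1} should be essentially the uniform-over-presentations version of Theorem~\ref{imp}, and I do not anticipate any serious obstacle beyond what Theorem~\ref{imp} already provides: the substantive input — that for a recursively presented group, and with no hypothesis on the Word Problem, one can algorithmically \emph{confirm} the existence of an $n$-F\o lner set while simultaneously producing a finite preimage in the free group, uniformly in the presentation — is already available. The one point that needs care is internal to $\widehat{\mathfrak K}$: since its subroutine halts when a pushforward becomes $n$-invariant, one must run it at the parameter which, via the equivalences collected in Remark~\ref{RF}, forces the support of the Reiter function obtained at the halt to contain an $n$-F\o lner set (rather than merely a $2n$-F\o lner set), so that the halting set is exactly $\{(i,n):F_{\Gamma_i,\mathsf X_i}(n)<\infty\}$, as needed. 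It is also worth recording that $\mathcal U$ cannot be taken total recursive — that would decide amenability inside the enumerated class of recursive presentations — and that $\mathcal U$ only bounds $F_{\Gamma_i,\mathsf X_i}(n)$ from above rather than computing it, because $|\pi_{\Gamma_i}(F)|$ is not computable without solving the Word Problem (cf.\ the Remark following Theorem~\ref{imp}).
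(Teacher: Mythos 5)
Your proposal is essentially the paper's own (largely implicit) argument: one defines $\mathcal U(i,n)$ by running the universal algorithm $\widehat{\mathfrak K}$ on $(n,P_i)$ and returning the cardinality of the finite support $F$ of the function on which the subroutine halts, the bound $F_{\Gamma_i,\mathsf X_i}(n)\le |F|=\mathcal U(i,n)$ coming exactly as in the implication $(iv)\implies(iii)$ of Theorem~\ref{imp}, so the corollary is indeed just the uniform-over-presentations reading of that theorem. One caveat about your ``point that needs care'': no choice of the parameter makes the halting set literally equal to $\{(i,n):F_{\Gamma_i,\mathsf X_i}(n)<\infty\}$ while still certifying an $n$-F\o lner set --- running $\mathfrak K(n)$ on characteristic functions is guaranteed to halt when $\Gamma_i$ admits $2n$-F\o lner sets (since $\Omega\in\mathfrak F\o l_{\Gamma,X}(2n)\Leftrightarrow\chi_\Omega\in\mathfrak Reit_{\Gamma,X}(n)$, Remark~\ref{RF}), whereas lowering the parameter to guarantee halting whenever $F_{\Gamma_i,\mathsf X_i}(n)<\infty$ would only certify $\lfloor n/2\rfloor$-F\o lner sets and hence not bound $F_{\Gamma_i,\mathsf X_i}(n)$; this factor-of-two gloss is, however, already present in the paper's description of $\widehat{\mathfrak K}$ (``stops if $\Gamma_i$ admits $n$-F\o lner sets'') and is immaterial for the corollary's use in Corollary~\ref{c3} and Theorem~\ref{A}, where the presentations are amenable and halting is therefore guaranteed for every $n$.
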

\begin{corollary}\label{c2}
For every $n\in \mathbb N$ fixed, the set of finitely generated  recursive presentations of groups admitting $n$-F\o lner sets is recursively enumerable.
\end{corollary}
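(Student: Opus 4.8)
The plan is to obtain this immediately from the universal algorithm $\widehat{\mathfrak K}$ introduced just before Corollary \ref{c1}, by freezing its first input. First I would fix $n\in\mathbb N$ and let $A_n$ be the algorithm that, on input $i\in\mathbb N$, simulates $\widehat{\mathfrak K}$ on the pair $(n,P_i)$ and halts exactly when that simulation halts. Since $\widehat{\mathfrak K}$ only consults the recursive relator set $\mathsf R_i$, which is part of the datum $P_i$, the procedure $A_n$ is a genuine algorithm in the single variable $i$, and by the defining property of $\widehat{\mathfrak K}$ its domain is
$$\{\, i\in\mathbb N :\ \widehat{\mathfrak K}\text{ halts on }(n,P_i)\,\}=\{\, i\in\mathbb N:\ \mathfrak F\o l_{\Gamma_i,\mathsf X_i}(n)\neq\emptyset\,\}.$$
The domain of a partial recursive function is recursively enumerable, so this set is recursively enumerable, which is exactly the assertion (via the fixed enumeration $(P_i)_{i\in\mathbb N}$ of finitely generated recursive presentations). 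Equivalently, one may take the two-place partial recursive function $\mathcal U$ of Corollary \ref{c1} and observe that $i\mapsto\mathcal U(i,n)$ is a one-place partial recursive function whose domain is precisely the set above.

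If one prefers a self-contained description, $A_n$ is obtained by unrolling the proof of Theorem \ref{imp}: as $P_i=\{\mathsf X_i\mid\mathsf R_i\}$ is a recursive presentation, $\ker\pi_{\Gamma_i}=\mathsf R_i^{\mathbb F_{\mathsf X_i}}$ is recursively enumerable, so $A_n$ lists the finite subsets $F\subset\mathbb F_{\mathsf X_i}$ and dovetails the subroutine $\mathfrak K(n)$ on each $\chi_F$ (building the coarsenings $\mathcal P_m$ and testing the estimate \eqref{est1}), halting as soon as one branch halts. Correctness is then the two implications already present in Theorem \ref{imp}: a halt on some $\chi_F$ means that ${\pi_{\Gamma_i}}_*(\chi_F)$ satisfies the Reiter inequality at level $n$, whence the super-level-set (Namioka) argument of the last item of Remark \ref{RF} yields an $n$-F\o lner set inside $\pi_{\Gamma_i}(F)$; conversely, if $\mathfrak F\o l_{\Gamma_i,\mathsf X_i}(n)\neq\emptyset$ then, as in the proof of Theorem \ref{imp}, the procedure detects a suitable $\chi_F$.

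I do not expect a real obstacle here: the corollary is a bookkeeping consequence of the machinery built for Theorem \ref{imp}. The only point needing a line of care is the equivalence ``$\widehat{\mathfrak K}$ halts on $(n,P_i)$'' $\Longleftrightarrow$ ``$\Gamma_i$ admits an $n$-F\o lner set'', which rests on the F\o lner--Reiter dictionary of Remark \ref{RF} together with \eqref{est}--\eqref{est1}; once that is granted (as it is in this section), the statement reduces to the elementary fact that the halting set of a one-variable algorithm is recursively enumerable. It is worth stressing for contrast that one cannot upgrade ``recursively enumerable'' to ``recursive'' by these means: deciding membership would require certifying F\o lner-ness of a guessed preimage from below, which is obstructed by the Word Problem exactly as in the Remark following Theorem \ref{imp}.
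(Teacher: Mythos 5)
Your argument is correct and is essentially the paper's own: Corollary \ref{c2} is presented there as an immediate consequence of the universal algorithm $\widehat{\mathfrak K}$, precisely because, with $n$ frozen, the halting set of $\widehat{\mathfrak K}(n,\cdot)$ is recursively enumerable and is identified (as you do, via Remark \ref{RF} and the argument of Theorem \ref{imp}) with the set of presentations admitting $n$-F\o lner sets. Your ``self-contained'' second paragraph merely unrolls the same dovetailing of $\mathfrak K(n)$ over the characteristic functions $\chi_F$, so the route is the same, not a different one.
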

\begin{remark}
For every $n\in \mathbb N$ fixed the property of admitting $n$-F\o lner sets is a presentation property, not a group property.
\end{remark}
\begin{corollary}\label{c3}
For every recursively enumerable class $\mathcal C$ of finitely generated  recursive presentations of amenable groups there exists a recursive function $U_{\mathcal C}$ such that for every $P_i\in \mathcal C$:
$$F_{\Gamma_i,\mathsf X_i}\leq U_{\mathcal C} \;\; \mbox{eventually}.$$
\end{corollary}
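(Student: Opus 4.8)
The plan is to obtain $U_{\mathcal C}$ from the two-place partial recursive function $\mathcal U$ of Corollary \ref{c1} by a diagonal truncation, using crucially that on presentations of amenable groups $\mathcal U$ is everywhere defined.

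First I would dispose of the trivial case $\mathcal C=\emptyset$, and otherwise fix, by recursive enumerability of $\mathcal C$, a recursive map $k\mapsto i_k$ whose image is the set of indices $i$ with $P_i\in\mathcal C$. Since each $\Gamma_{i_k}$ is amenable, one has $F_{\Gamma_{i_k},\mathsf X_{i_k}}(n)<\infty$ for every $n\in\mathbb N$, so every pair $(i_k,n)$ lies in the domain of $\mathcal U$; in particular the algorithm computing $\mathcal U$ halts on all such inputs.

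Next I would define
$$U_{\mathcal C}(n):=\max\{\mathcal U(i_0,n),\mathcal U(i_1,n),\ldots,\mathcal U(i_n,n)\}$$
and check that this is a \emph{total} recursive function: on input $n$ one first computes the finite list $i_0,\ldots,i_n$ from the enumeration of $\mathcal C$, then runs the algorithm for $\mathcal U$ on each of the $n+1$ inputs $(i_k,n)$ with $k\leq n$ --- each of these computations halts by the previous step --- and finally outputs the maximum of the values obtained. Then, for a fixed $P_{i_k}\in\mathcal C$ and every $n\geq k$, the index $i_k$ occurs among $i_0,\ldots,i_n$, so Corollary \ref{c1} gives
$$F_{\Gamma_{i_k},\mathsf X_{i_k}}(n)\leq\mathcal U(i_k,n)\leq U_{\mathcal C}(n),$$
which is exactly $F_{\Gamma_{i_k},\mathsf X_{i_k}}\leq U_{\mathcal C}$ eventually.

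The only genuine subtlety --- and the reason for truncating the list at $k\leq n$ --- is totality of $U_{\mathcal C}$: the naive choice $\sup_k\mathcal U(i_k,n)$ over all of $\mathcal C$ need not be finite, let alone recursive, whereas the truncated maximum keeps each value $U_{\mathcal C}(n)$ the outcome of finitely many halting computations; the price paid is only that the bound holds eventually rather than everywhere, which is precisely the form of the statement.
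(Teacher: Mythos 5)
Your proof is correct and follows essentially the same route as the paper: the paper also takes the functions $n\mapsto\mathcal U(i,n)$ from Corollary \ref{c1} (viewed as a recursively enumerable family of recursive functions, total on amenable presentations) and uses the diagonal truncated maximum $U_{\mathcal C}(n)=\max_{k\leq n}\mathcal U(i_k,n)$, which is recursive and eventually dominates each member of the family. Your explicit justification of totality via halting of $\mathcal U$ on the domain $\{(i,n):F_{\Gamma_i,\mathsf X_i}(n)<\infty\}$ is exactly the point the paper leaves implicit.
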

\begin{proof}
More generally, suppose that $(f_i)_{i\in\mathbb N}$ is a recursively enumerable set of recursive functions $f_i\colon \mathbb N\to \mathbb N$. Then the function $U\colon \mathbb N\to \mathbb N$, defined as
$$U(n):= \max_{i\leq n} f_i(n)$$
is recursive and eventually dominates $f_i$, for every $i\in\mathbb N$.
\end{proof}
\noindent This concludes the proof of Theorem \ref{A} in the Introduction.

\section{Amenability and the Word Problem}\label{swp}

\begin{theorem}\label{WP}
The following are equivalent:
\begin{itemize}
\item[(i)]
$\Gamma$ is  amenable with solvable Word Problem;
\item[(ii)]
$\Gamma$ is recursively presentable and
there exists an algorithm with \\
INPUT: $n\in \mathbb N$\\
OUTPUT: $F \subset \mathbb F_{\mathsf X}\,$ finite, such that $\pi_{\Gamma}(F)\in \mathfrak F\o l_{\Gamma,X}(n)$ and $|F|=|\pi_\Gamma(F)|.$ 
\end{itemize}
\end{theorem}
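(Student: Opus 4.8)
The plan is to prove the two implications separately; (i)$\Rightarrow$(ii) is routine, and the substance is in (ii)$\Rightarrow$(i), where the point is to extract computational content from the \emph{one-to-one} promise $|F|=|\pi_\Gamma(F)|$.

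\smallskip
\noindent\emph{(i)$\Rightarrow$(ii).} Solvability of the WP means exactly that $\ker\pi_\Gamma\subset\mathbb F_{\mathsf X}$ is recursive, so $\Gamma$ is recursively presentable (take $R=\ker\pi_\Gamma$) and, for any $\omega_1,\omega_2\in\mathbb F_{\mathsf X}$, we can decide whether $\pi_\Gamma(\omega_1)=\pi_\Gamma(\omega_2)$ by testing $\omega_1\omega_2^{-1}\in\ker\pi_\Gamma$. I would then run the obvious search: fix a computable enumeration $H_1,H_2,\dots$ of the finite subsets of $\mathbb F_{\mathsf X}$ and, on input $n$, scan this list; for each $H_k$ first decide (using the equality test on all pairs) whether $\pi_\Gamma$ is injective on $H_k$, and if so --- so that $|\pi_\Gamma(H_k)|=|H_k|$ is known --- compute, for each $x\in X$, the number $|\pi_\Gamma(H_k)\setminus x\pi_\Gamma(H_k)|$ as the number of $\nu\in H_k$ with $\pi_\Gamma(\mathsf x^{-1}\nu)\ne\pi_\Gamma(\mu)$ for every $\mu\in H_k$. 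This lets us decide whether $\pi_\Gamma(H_k)\in\mathfrak F\o l_{\Gamma,X}(n)$, and we output the first $H_k$ passing both tests. Termination is clear: amenability furnishes some $\Omega\in\mathfrak F\o l_{\Gamma,X}(n)$, and selecting one $\pi_\Gamma$-preimage of each element of $\Omega$ yields a finite $F$ with $\pi_\Gamma(F)=\Omega$ and $|F|=|\Omega|$, which occurs in the enumeration.

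\smallskip
\noindent\emph{(ii)$\Rightarrow$(i).} Amenability is immediate, since the algorithm of (ii) witnesses $\mathfrak F\o l_{\Gamma,X}(n)\ne\emptyset$ for every $n$; the work is to solve the WP. Write $|g|_X$ for the word length of $g\in\Gamma$ with respect to $X$. Given $\omega\in\mathbb F_{\mathsf X}$, I would set $n:=|\omega|+1$, run the algorithm on $n$ to obtain a finite $F=\{\nu_1,\dots,\nu_N\}\subset\mathbb F_{\mathsf X}$ with $\Omega:=\pi_\Gamma(F)\in\mathfrak F\o l_{\Gamma,X}(n)$ and $|F|=|\Omega|=N$ --- the last condition saying precisely that $\pi_\Gamma(\nu_1),\dots,\pi_\Gamma(\nu_N)$ are pairwise distinct --- and then, fixing a recursive enumeration of $\ker\pi_\Gamma$ (available since $\Gamma$ is recursively presentable), dovetail through $\ker\pi_\Gamma$ and stop the first time either $\omega$ itself is listed, answering $\omega\in\ker\pi_\Gamma$, or some word $\omega\nu_i\nu_j^{-1}$ with $i\ne j$ is listed, answering $\omega\notin\ker\pi_\Gamma$. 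Consistency is automatic: if $\pi_\Gamma(\omega)=1$ then $\pi_\Gamma(\omega\nu_i\nu_j^{-1})=\pi_\Gamma(\nu_i)\pi_\Gamma(\nu_j)^{-1}\ne1$ for $i\ne j$ by pairwise distinctness, so only the first clause can fire; and if $\pi_\Gamma(\omega)\ne1$ then $\omega\notin\ker\pi_\Gamma$, so only the second can fire. Termination in the first case is clear. For the second, with $g:=\pi_\Gamma(\omega)\ne1$, the key estimate is the elementary F\o lner bound: writing $g$ as a product of at most $|g|_X$ elements of $X\cup X^{-1}$ and telescoping, using left-invariance of the counting measure and the second bullet of Remark \ref{RF}, one gets $|\Omega\setminus g\Omega|\le|g|_X\,|\Omega|/n$; since $|g|_X\le|\omega|=n-1<n$ this is $<|\Omega|$, so $\Omega\cap g\Omega\ne\emptyset$, say $a=gb$ with $a,b\in\Omega$ and necessarily $a\ne b$ because $g\ne1$. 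Writing $a=\pi_\Gamma(\nu_j)$ and $b=\pi_\Gamma(\nu_i)$, distinctness of the images forces $i\ne j$, while $\pi_\Gamma(\omega\nu_i\nu_j^{-1})=g\,b\,a^{-1}=1$; hence $\omega\nu_i\nu_j^{-1}\in\ker\pi_\Gamma$ is eventually listed and the procedure halts with the correct answer.

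\smallskip
\noindent\emph{Main obstacle.} I expect the hard part to be seeing how the one-to-one promise can be used at all: $|F|=|\pi_\Gamma(F)|$ is co-recursively-enumerable information that cannot be verified, and the naive attempts --- trying to detect that $g$ fails to preserve $\Omega$, or that $g$ moves some point --- are useless, because any such condition is equivalent to $g=1$ and hence exactly as hard as the WP itself. The resolution is that left translation by $g\ne1$ is fixed-point-free, so it suffices to catch $g$ carrying one element of $\Omega$ to \emph{another} element of $\Omega$; the one-to-one promise is precisely what makes the witness $\omega\nu_i\nu_j^{-1}\in\ker\pi_\Gamma$ (with $i\ne j$) a proof that $\omega\notin\ker\pi_\Gamma$, and choosing the F\o lner parameter $n>|\omega|$ forces such a witness to exist, via $g\in\Omega\Omega^{-1}$.
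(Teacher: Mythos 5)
Your proposal is correct, and while your (i)$\Rightarrow$(ii) is the same routine search as the paper's, your (ii)$\Rightarrow$(i) takes a genuinely different and more elementary route. The paper turns the one-to-one preimage of an $n^2$-F\o lner set into \emph{computable sofic approximations}: it enumerates $\ker\pi_\Gamma$ to build partial bijections $\Sigma_\ell\subset[k]^2$ approximating the left action of each generator, waits until each $\Sigma_\ell$ covers a $(1-\tfrac{1}{n^2})$-fraction of $[k]$ (termination guaranteed by the F\o lner property), extends them to permutations $\sigma_\ell\in Sym(k)$, and then decides triviality of $\omega$ by the dichotomy on the normalized Hamming length of $\omega(\sigma_1,\dots,\sigma_d)$ (at most $\tfrac1n$ if $\omega\in\ker\pi_\Gamma$, at least $1-\tfrac1n$ otherwise). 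You instead show directly that the complement of $\ker\pi_\Gamma$ is recursively enumerable: triviality of $\omega$ is witnessed by $\omega$ itself appearing in the enumeration of $\ker\pi_\Gamma$, and nontriviality by a relation $\omega\nu_i\nu_j^{-1}\in\ker\pi_\Gamma$ with $i\neq j$, which is sound precisely because of the injectivity promise and which must exist because the telescoping estimate $|\Omega\setminus g\Omega|\le |g|_X|\Omega|/n<|\Omega|$ (valid by the second bullet of Remark \ref{RF}, with $n>|\omega|$) forces $\Omega\cap g\Omega\neq\emptyset$, while $g\neq 1$ acts without fixed points. Both arguments are complete; yours is shorter, avoids the symmetrization $X=X^{-1}$, the $n^2$-F\o lner parameter and the permutation/Hamming machinery, whereas the paper's construction buys the stronger byproduct it explicitly records, namely that $\Gamma$ has computable sofic approximations in the sense of \cite{CAV}.
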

\begin{proof}${}$\\$(i)\implies (ii)$\\
Suppose $\Gamma$ is amenable with solvable Word Problem. Then, by the latter property, for any given finite subset $F \subset \mathbb F_X$ we can algorithmically check if $\pi_{\Gamma}(F)\in \mathfrak F\o l_{\Gamma,X}(n)$ and $|F|=|\pi_\Gamma(F)|.$
Fixing an enumeration of the finite subsets of $\mathbb F_X$, we check these conditions until we find a suitable $F$, whose existence is guaranteed by  amenability of $\Gamma$.\\
Finally, solvability of the Word Problem ensures existence of a recursive set $R:=\ker \pi_\Gamma$ of defining relations of $\Gamma$.\\
$(ii)\implies (i)$\\
It is clear that $(ii)$ implies amenability of $\Gamma$. It remains to show that $\Gamma$ has solvable Word Problem.
By virtue of Remark \ref{RF}, we have that  $\mathfrak F\o l_{\Gamma,X\cup X^{-1}}(n)=\mathfrak F\o l_{\Gamma,X}(n)$. Moreover, solvability of the Word Problem does not depend on the choice of the generating set. We can therefore assume, without loss of generality, that $X=X^{-1}$.
For a given $\omega\in \mathbb F_{\mathsf X}$, we denote by $n:=\max\{|\omega|,3\}$ and compute 
a finite subset $F$  of $\mathbb F_{\mathsf X}$ such that 
$\pi_{\Gamma}(F)\in \mathfrak F\o l_{\Gamma,X}(n^2)$ and
 $|F|=|\pi_\Gamma(F)|=:k$. 
 We write $F=:\{f_1,f_2,\ldots, f_k\}$ and ${\mathsf X=:\{ \mathsf x_1, \mathsf x_2,\ldots, \mathsf x_d\}}$.
  We are going to algorithmically construct $d$ permutations $\sigma_1,\ldots,\sigma_d\in Sym(k)$ that are ``approximations"  for the left action of $x_1,\ldots x_d$ on $\pi_\Gamma(F)$,  interpreting $[k]$ as a copy of $\pi_\Gamma(F)$.
We have no assumptions on the Word Problem but the group $\Gamma$ is recursively presented,  thus $\ker \pi_\Gamma$ is recursively enumerable: in order to obtain the sought permutations we list the trivial words $\eta_1,\eta_2,\ldots, \eta_t,\ldots$ and then, for every $\ell\in [d]$, we construct, in a way that we will describe soon, a sequence of approximations $$\Sigma^0_\ell\subset \Sigma^1_\ell\subset \cdots \subset \Sigma^t_\ell \subset\cdots,$$ where 
$\Sigma^t_\ell $, for $t=0,1,\ldots$, is not yet a permutation of $[k]$ but just a subset of $[k]^2$, with the following property:
\begin{equation}\label{leftaction}
(i,j)\in \Sigma^t_\ell \implies \pi_\Gamma(\mathsf x_\ell f_i)=\pi_\Gamma(f_j).
\end{equation}
We start by setting $\Sigma^0_\ell=\emptyset$ for $\ell=1,\ldots d$. So, for $t=0$, property \eqref{leftaction} trivially holds.\\ As we list the elements of $\ker \pi_\Gamma$, we update the
 $\Sigma^t_\ell$'s in this way:  we read $\eta_t \in \ker \pi_\Gamma$, for each $(i,j)\in[k]^2$ and each $\ell\in[d]$ such that $\mathsf x_\ell f_i f_j^{-1}= \eta_t$ in $\mathbb F_{\mathsf X}$, we set $\Sigma^t_\ell=\Sigma^{t-1}_\ell\cup \{(i,j)\}$. In this way, property \eqref{leftaction} is maintained for every $t$.\\
We stop when we meet $\hat t$ such that $\min_\ell |\Sigma^{\hat t}_\ell|>(1-\frac{1}{n^2})k$.
 We then simply write $\Sigma_\ell$ instead of $\Sigma^{\hat t}_\ell$. \\
Indeed, since $\pi_{\Gamma}(F)\in \mathfrak F\o l_{\Gamma,X}(n^2)$,  by Remark \ref{RF},
 we have that 

\begin{equation}\label{dannata}
\frac{|\{(i,j):\,\mathsf x_\ell f_i f_j^{-1}\in \ker \pi_\Gamma\}|}{k}\geq
\frac{|\pi_{\Gamma}(F)\cap x_\ell \pi_{\Gamma}( F)|}{|\pi_{\Gamma}(F)|}>1-\frac{1}{n^2}.
\end{equation}
This guarantees that our procedure will stop.
Injectivity of  $\pi_\Gamma$ on $F$ guarantees that if $(i,j),(i',j')\in \Sigma_\ell$ are distinct then $i\neq i'$ and $j\neq j'$.
Then for $\ell=1,\ldots,d$ we can algorithmically choose $\sigma_\ell\in Sym(k)$,
 a permutation of $[k]$ such that  $(i,j)\in \Sigma_\ell \implies \sigma_\ell(i)=j.$

\begin{claim}
The permutations $\sigma_1,\ldots,\sigma_d$  have the following property 
\begin{equation}\label{dogma}
\ell_H(\omega(\sigma_1,\ldots,\sigma_d))
\begin{cases}
\leq \frac{1}{n}, \; \mbox{ if }\omega \in B_n\cap  \ker \pi_\Gamma \\
\geq 1 - \frac{1}{n},\;\mbox{ if } \omega \in B_n \setminus  \ker \pi_\Gamma
\end{cases}
\end{equation}
where  for $\sigma\in Sym(k)$ the positive real number $\ell_H(\sigma):=\frac{|\{i\in[k]:\;\sigma(i)\neq i\}|}{k}$ is the \emph{normalized Hamming length} of $\sigma$. 
\end{claim}
\begin{proof}[Proof of the claim]
Suppose $\omega=\mathsf x_{l_n}\ldots\mathsf x_{l_2}\mathsf x_{l_1}$, where 
$l_z\in [d]$, 
 $z=1,2,\ldots,n$.  We define the subset
 $$I_{\omega}:=\{i_0\in [k]: \,
\exists i_1, i_2,
\ldots, i_n \in [k] : \,(i_{t-1},i_{t})
\in \Sigma_{l_z}, 
\forall z\in [n] \}.$$ 
 
Informally, $ I_{\omega}$ is the set of $i \in [k]$ for which we can compute $\omega(\sigma_1,\ldots,\sigma_d)(i)=\sigma_{l_n}\ldots\sigma_{l_2}\sigma_{l_1}(i)$ only looking at $\Sigma_1,\ldots,\Sigma_d$.
In particular, by property \eqref{leftaction} of the $\Sigma_\ell$'s, we have:
\begin{equation}\label{parola}
i\in I_\omega \implies \pi_\Gamma(\omega f_i)=\pi_\Gamma(f_{\omega(\sigma_1,\ldots,\sigma_d)(i)}).
\end{equation}
Setting  $N_{\ell}:=\{i\in[k]: (i,j)\notin \Sigma_\ell \; \forall j\in [k] \}$, we can also write $I_\omega=\{i_0\in [k]: \, \sigma_{l_{n'}}\ldots\sigma_{l_2}\sigma_{l_1}(i_0)\notin N_{l_{n'}},\, \forall n'\in[n] \}.$\\
In order to estimate the cardinality of $I_\omega$, we define $\phi \colon [k]\setminus I_\omega \hookrightarrow N_{l_n}\sqcup\ldots \sqcup N_{l_2}\sqcup N_{l_1}$,\\
$\phi(i):=( n',i')$ where $n'$ is the smallest number in $[n]$ such that $\sigma_{l_{n'}}\ldots\sigma_{l_2}\sigma_{l_1}(i)\in N_{l_{n'}}$, and $i':=\sigma_{l_{n'}}\ldots\sigma_{l_2}\sigma_{l_1}(i)$. 
By construction of $\Sigma_\ell$, $|N_\ell|\leq \frac{k}{n^2}$, combining with the fact that the map $\phi$ is injective, we have
\begin{equation}\label{acc}\begin{split}
&|[k]\setminus I_\omega|\leq \sum^n_{z=1} |N_{l_z}^{s_z}|\leq \frac{k}{n},\\
&|I_\omega|\geq (1-\frac{1}{n})k.\end{split}
\end{equation}

Suppose $\omega\in \ker \pi_\Gamma$. Then, for $i\in I_\omega$, property \eqref{parola} implies  $\pi_\Gamma(f_{\omega(\sigma_1,\ldots,\sigma_d)(i)})=\pi_\Gamma(f_{i})$. By injectivity of $\pi_\Gamma$ on $F$, $i$ is a fixed point of $\omega(\sigma_1,\ldots,\sigma_d)$; by virtue of estimate \eqref{acc}, we the have 
$\ell_H(\omega(\sigma_1,\ldots,\sigma_d))\leq \frac{|[k]\setminus I_\omega|}{k}\leq \frac{1}{n}.$

If $\omega\notin \ker \pi_\Gamma$, then again by property \eqref{parola} we have that, for $i\in I_\omega$, $\pi_\Gamma(f_{\omega(\sigma_1,\ldots,\sigma_d)(i)})\neq\pi_\Gamma(f_{i})$.
This means that  $I_\omega$ contains only non-fixed points and therefore, by virtue of estimate \eqref{acc},\\
$\ell_H(\omega(\sigma_1,\ldots,\sigma_d))\geq \frac{|I_\omega|}{k}\geq 1- \frac{1}{n}.$
This ends the proof of the claim.\qedhere
\end{proof}
We are now in position to complete the proof of the theorem. Since the number $\ell_H(\omega(\sigma_1,\ldots,\sigma_d))$ is computable, by property \eqref{dogma} we can algorithmically  determine whether $\omega$ belongs to  $\ker \pi_\Gamma$ or not. Thus $\Gamma$ has solvable Word Problem (in the terminology of \cite{CAV} we actually proved that $\Gamma$ has \emph{computable sofic approximations}, see Theorem 3.3.1 in \cite{CAV}).
\end{proof}

\noindent In combination with Theorem \ref{WP} and the results in \cite{Pre}, this proves  Theorem \ref{C} in the Introduction.

\begin{corollary}\label{ovvio}
Suppose that $\Gamma$ has solvable Word Problem. Then the following are equivalent:
\begin{itemize}
\item[(i)]
$\Gamma$ is amenable;
\item[(ii)]
there exists an algorithm with \\
INPUT: $n\in \mathbb N$\\
OUTPUT: $F \subset \mathbb F_X$ finite, such that $\pi_{\Gamma}(F)\in \mathfrak F\o l_{\Gamma,X}(n)$ and $|F|=|\pi_\Gamma(F)|;$ 
\item[(iii)]$\Gamma$ has computable F\o lner sets;
\item[(iv)] $\Gamma$ has computable Reiter functions;
\item[(v)] $\Gamma$ has subrecursive F\o lner function.

\end{itemize}
\end{corollary}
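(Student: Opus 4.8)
The plan is to reduce everything to Theorems~\ref{imp} and~\ref{WP}, the key observation being that solvability of the Word Problem forces $\Gamma$ to be recursively presentable: indeed $\ker\pi_\Gamma\subset\mathbb F_{\mathsf X}$ is then recursive, so one may take $R:=\ker\pi_\Gamma$ as a recursive set of defining relations. Consequently Theorem~\ref{imp} is available, and its equivalences $(i)\Leftrightarrow(ii)\Leftrightarrow(iv)$ translate here into $(i)\Leftrightarrow(v)\Leftrightarrow(iv)$, since ``subrecursive F\o lner function'' and ``computable Reiter functions'' are precisely clauses $(ii)$ and $(iv)$ of that theorem.

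Next I would invoke the extra hypothesis on the Word Problem to bring in condition $(ii)$. By Theorem~\ref{WP}, the conjunction ``$\Gamma$ amenable with solvable WP'' is equivalent to the existence of the algorithm described in clause $(ii)$ above; since we assume throughout that $\Gamma$ has solvable WP, this reads $(i)\Leftrightarrow(ii)$. It then remains only to splice $(iii)$ into the chain, and the implications are routine: $(ii)\implies(iii)$ because the algorithm furnished by $(ii)$ is, after discarding the constraint $|F|=|\pi_\Gamma(F)|$, already an algorithm computing F\o lner sets; and $(iii)\implies(i)$ because the mere existence (let alone computability) of $n$-F\o lner sets for every $n$ is the definition of amenability. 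Assembling $(iii)\implies(i)\implies(ii)\implies(iii)$ with $(i)\Leftrightarrow(v)\Leftrightarrow(iv)$ closes the proof.

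I do not expect a serious obstacle: the corollary is essentially bookkeeping, combining the two main theorems once one notices that solvable WP grants recursive presentability. The only point worth a moment's care is the asymmetry between $(ii)$ and $(iii)$ --- the requirement $|F|=|\pi_\Gamma(F)|$ appearing in $(ii)$ is genuine content in general (without solvable WP it is strictly stronger, by the remark following Theorem~\ref{imp}), but under the standing hypothesis it is supplied for free by Theorem~\ref{WP}, so that the entire list collapses into a single equivalence class.
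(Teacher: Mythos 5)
Your proof is correct, and the overall skeleton (solvable WP forces $\ker\pi_\Gamma$ to be recursive, hence $\Gamma$ is recursively presentable; Theorem~\ref{WP} gives $(i)\Leftrightarrow(ii)$; dropping the injectivity constraint gives $(ii)\Rightarrow(iii)\Rightarrow(i)$) matches the paper. Where you diverge is in how $(iv)$ and $(v)$ are attached to the chain: you invoke Theorem~\ref{imp} wholesale, using recursive presentability to import its equivalences amenable $\Leftrightarrow$ subrecursive F\o lner function $\Leftrightarrow$ computable Reiter functions. The paper instead handles these clauses directly and elementarily: $(ii)\Rightarrow(v)\Rightarrow(i)$ is immediate (the output $F$ for input $n$ gives the recursive bound $F_{\Gamma,X}(n)\le|F|$), $(iv)\Rightarrow(i)$ is Remark~\ref{RF} (Namioka's trick extracts a F\o lner set from a Reiter function), and $(ii)\Rightarrow(iv)$ is the observation that when $\pi_\Gamma$ is injective on $F$ and $\pi_\Gamma(F)\in\mathfrak F\o l_{\Gamma,X}(2n)$, the pushforward of $\chi_F$ is $\chi_{\pi_\Gamma(F)}\in\mathfrak Reit_{\Gamma,X}(n)$. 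The trade-off: your route is shorter bookkeeping but leans on the nontrivial direction of Theorem~\ref{imp} (the enumeration-of-relators algorithm $\mathfrak K$) where it is not needed, while the paper's argument shows that under solvable WP all of these implications are essentially trivial and the heavy machinery can be bypassed; both arguments are complete and valid, and your aside that the constraint $|F|=|\pi_\Gamma(F)|$ is genuinely stronger only in the absence of solvable WP is accurate.
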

\begin{proof}
By virtue of Theorem \ref{WP} we have $(i)\implies (ii)$. It is obvious that $(ii)\implies (iii)\implies (i)$ and that $(ii)\implies (v)\implies (i)$; by Remark \ref{RF} we have $(iv)\implies (i)$.\\ Finally $(ii)\implies(iv)$ because if $F \subset \mathbb F_X$  is finite, such that $\pi_{\Gamma}(F)\in \mathfrak F\o l_{\Gamma,X}(2n)$ and ${|F|=|\pi_\Gamma(F)|}$ then the pushforward of the characteristic function $\chi_F$ of $F$ is the characteristic function $\chi_{\pi_\Gamma(F)}$ of  $\pi_\Gamma(F)$: this is $n$-invariant by Remark \ref{RF}.
%for every $x\in X$ $$\frac{\|\chi_{\pi_\Gamma(F)}-_x\!\!\chi_{\pi_\Gamma(F)}\|_{1,\Gamma}}{\|\chi_{\pi_\Gamma(F)}\|_{1,\Gamma}}=\frac{2|\pi_\Gamma(F)\setminus x \pi_\Gamma(F)|}{|\pi_\Gamma(F)|}\leq n^{-1}.$$
\end{proof}

\section{Generic EP}\label{GEP}
\noindent This section is devoted to proving the following theorem (cf.\ Theorem \ref{B} in the Introduction).
\begin{theorem}\label{uno}
Suppose that $\Gamma$ is amenable and recursively presentable. Then the following are equivalent:
\begin{itemize}
\item[(i)]
$\Gamma$ has solvable Word Problem;
\item[(ii)]
$\Gamma$ has solvable generic Equality Problem.
\end{itemize}
\end{theorem}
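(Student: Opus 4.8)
The plan is to prove the two implications separately, with the $(ii)\Rightarrow(i)$ direction being the substantial one.

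The direction $(i)\Rightarrow(ii)$ is essentially trivial: if $\Gamma$ has solvable Word Problem, then $\pi_\Gamma(\omega_1)=\pi_\Gamma(\omega_2)$ if and only if $\omega_1\omega_2^{-1}\in\ker\pi_\Gamma$, which is decidable; taking $S=\mathbb F_{\mathsf X}$ itself (which is trivially generic) shows the generic Equality Problem is solvable. So the whole content is the converse.

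For $(ii)\Rightarrow(i)$, the strategy is to combine the generic solvability of EP with the characterization of solvable WP from Theorem \ref{WP}: it suffices to produce, for each $n\in\mathbb N$, a finite $F\subset\mathbb F_{\mathsf X}$ with $\pi_\Gamma(F)\in\mathfrak F\o l_{\Gamma,X}(n)$ and $|F|=|\pi_\Gamma(F)|$, algorithmically in $n$. By amenability and Theorem \ref{imp}(iii) we can already, algorithmically in $n$, produce a finite $F\subset\mathbb F_{\mathsf X}$ such that $\pi_\Gamma(F)$ \emph{contains} an $n$-F\o lner set; the issue is that $\pi_\Gamma$ need not be injective on $F$, and without WP we cannot directly detect the collapsing. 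The idea is to first enlarge/relocate $F$ so that it lands inside the generic set $S$ on which EP is solvable — concretely, since $S$ is generic and the F\o lner property is translation-invariant (Remark \ref{RF}), a suitable translate $F g$ of a preimage of an $n$-F\o lner set will, with the help of a counting argument on balls, have a large-measure intersection with $\pi_\Gamma^{-1}(S)$; more precisely, I would run the algorithm from Theorem \ref{imp}(iii) to obtain $F$ with $\pi_\Gamma(F)$ containing a $(2n)$-F\o lner set, then search over translates $\omega\in\mathbb F_{\mathsf X}$ until $F\omega\subset S$-image is verifiable. Then on $F\omega$ we can actually run the EP algorithm to decide, for every pair $(\nu_1,\nu_2)$ of elements of $F\omega$, whether $\pi_\Gamma(\nu_1)=\pi_\Gamma(\nu_2)$; this lets us pass to a subset $F'\subseteq F\omega$ on which $\pi_\Gamma$ is injective, and a standard layer-cake/pigeonhole estimate shows that the image of $F'$ still contains (or can be arranged to be) an $n$-F\o lner set, because discarding duplicates only shrinks $\pi_\Gamma(F)$ by passing to a section, which does not destroy the F\o lner inequality after the $2n\to n$ slack. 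This produces exactly the data required by Theorem \ref{WP}(ii), hence $\Gamma$ has solvable WP.

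The main obstacle I expect is the interface between genericity (a statement about balls $B_n$ in $\mathbb F_{\mathsf X}$, hence about the \emph{free} group) and the F\o lner condition (a statement about $\Gamma$): one must ensure that a preimage of a $\Gamma$-F\o lner set can be chosen, or translated, to sit inside the generic subset $S$, and this requires care because $S$ being generic controls only the asymptotic density in the free group, while our finite set $F$ is fixed in advance. The key technical point will be a counting lemma: since $\lim_n |S\cap B_n|/|B_n|=1$, for any fixed finite $F$ of cardinality $k$ one can find arbitrarily long $\omega$ with $F\omega\subset S$ — or, if $S$ is merely generic rather than co-finite, argue via an averaging/union bound over a large ball of translates $\omega$ that at least one works, and this translate can be found algorithmically by exhaustive search once membership in $S$ is semidecidable (which is exactly what solvability of EP on $S$, restricted to the diagonal against a fixed known element, provides — comparing $\pi_\Gamma(\nu)$ with $\pi_\Gamma(\nu)$ trivially succeeds, so one really needs $S$ recursively enumerable; if genericity only gives $S$ with recursive membership on pairs one has to be slightly more careful and may instead replace $S$ by the r.e.\ set of inputs on which the EP-algorithm halts). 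Resolving this bookkeeping — and verifying that the translation and the duplicate-removal together preserve enough F\o lner slack — is where the real work lies; the rest is assembling Theorems \ref{imp} and \ref{WP}.
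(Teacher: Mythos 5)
Your architecture is the right one --- reduce to Theorem \ref{WP} by producing, algorithmically in $n$, a one-to-one preimage of an $n$-F\o lner set, and use a counting lemma to push a preimage of a F\o lner set into the generic set $S$ by right translation (this is exactly the paper's upper Banach genericity lemma). But the endgame has a genuine gap. Theorem \ref{WP}(ii) requires a finite $F$ with $\pi_{\Gamma}(F)\in \mathfrak F\o l_{\Gamma,X}(n)$ and $|F|=|\pi_\Gamma(F)|$: the image must \emph{be} a F\o lner set. What your construction delivers is a set $F'\subseteq F\omega$ on which $\pi_\Gamma$ is injective and whose image merely \emph{contains} a F\o lner set, because Theorem \ref{imp}(iii) only ever hands you a superset; removing duplicates does not shrink the image at all (indeed $\pi_\Gamma(F')=\pi_\Gamma(F\omega)$), and no amount of ``$2n\to n$ slack'' turns a superset of a F\o lner set into a F\o lner set. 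Identifying \emph{which} subset of $F'$ maps onto the F\o lner set is precisely where one would naively need the Word Problem. The missing idea is a certifier: since $\pi_\Gamma$ is injective on every $F''\subseteq F'$, the pushforward of $\chi_{F''}$ is a genuine characteristic function, so one can run the semi-decision subroutine $\mathfrak K(n)$ from the proof of Theorem \ref{imp} in parallel on all these $\chi_{F''}$ --- or, as the paper does, on a recursively enumerable family of injective-preimage candidates built directly from the halting behaviour of the EP algorithm (Lemma \ref{elenco}); a subroutine that halts certifies that its $F''$ has $2n$-F\o lner image, and at least one is guaranteed to halt. Without some such step the data you feed to Theorem \ref{WP} does not satisfy its hypotheses.

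A secondary point, which you half-address yourself: membership in $S$ is not semidecidable from the EP algorithm (the algorithm may halt, even correctly, on pairs outside $S\times S$), so ``search over translates until $F\omega\subset S$ is verifiable'' cannot be taken literally. Your proposed repair --- work with the r.e.\ set of inputs on which the algorithm halts --- is essentially the paper's route: Lemma \ref{elenco} enumerates all finite $B\subset \mathbb F_{\mathsf X}$ on which the pairwise EP check halts and reports injectivity, never attempting to decide membership in $S$ itself; genericity of $S$ is used only to guarantee, via the translation lemma, that this enumeration contains injective preimages of $n$-F\o lner sets for every $n$. Making this precise, and then adding the certification step above, is what separates your outline from a complete proof.
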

As stated in the Introduction, the Kharlampovich groups $G(M)$  are finitely presented, solvable and therefore amenable, and have unsolvable Word Problem (see \cite{H,KMS}). Therefore, by the previous theorem, they have unsolvable generic Equality Problem, thus providing a solution to \cite[Problem 1.5, b]{Afinite}.

\noindent In order to prove Theorem \ref{uno}, we need some preliminary results.

\begin{lemma}\label{elenco}
Suppose $\Gamma$ has solvable Equality Problem on $S$, where  $S\subset \mathbb  F_{\mathsf X}$.
Then there exists a  family $\mathcal A$ of finite subsets of  $\mathbb  F_{\mathsf X}$, with the following properties:
\begin{enumerate}[(1-{$\mathcal A$})]
\item $\mathcal A$ is recursively enumerable;
\item ${\pi_\Gamma}_{|_A}$ is injective $\forall A\in \mathcal A$;
\item  $\forall S'\subset S, \mbox{ $S'$ finite, }\; \exists A\in \mathcal A$ such that $ \pi_\Gamma(A)= \pi_\Gamma(S')$.
\end{enumerate}
\end{lemma}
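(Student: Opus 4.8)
The plan is to let $\mathcal A$ consist of the finite subsets of $\mathbb F_{\mathsf X}$ that an algorithm can \emph{certify to be pairwise $\pi_\Gamma$-distinct} by querying a solver for the Equality Problem on $S$. Fix such a solver $\mathcal E$: on every input $(\omega_1,\omega_2)$, whenever $\mathcal E(\omega_1,\omega_2)$ halts its verdict on whether $\pi_\Gamma(\omega_1)=\pi_\Gamma(\omega_2)$ is correct, and it halts on all of $S\times S$. Put
\[
\mathcal A:=\bigl\{A\subseteq\mathbb F_{\mathsf X}\ \text{finite}:\ \mathcal E(a,a')\ \text{halts with verdict}\ \pi_\Gamma(a)\ne\pi_\Gamma(a')\ \text{for all distinct}\ a,a'\in A\bigr\}.
\]
Property (1-$\mathcal A$) is then immediate: membership of a given finite $A$ in $\mathcal A$ is a $\Sigma_1$ condition, since one can dovetail the finitely many computations $\mathcal E(a,a')$ and accept once they have all halted with the required verdict; hence $\mathcal A$ is recursively enumerable. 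Property (2-$\mathcal A$) is also immediate under the stated behaviour of $\mathcal E$: if $A\in\mathcal A$ and $a,a'\in A$ are distinct, then $\mathcal E$ has halted on $(a,a')$ and, being correct whenever it halts, has truthfully reported $\pi_\Gamma(a)\ne\pi_\Gamma(a')$; hence $\pi_\Gamma|_A$ is injective.

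For (3-$\mathcal A$), given a finite $S'\subseteq S$ I would choose one representative out of each fibre of $\pi_\Gamma|_{S'}$, obtaining a set $A\subseteq S'$ with $\pi_\Gamma(A)=\pi_\Gamma(S')$ and $\pi_\Gamma|_A$ injective. Every pair of distinct elements of $A$ lies in $S\times S$, where $\mathcal E$ halts; since it is correct there and the two elements have distinct images, $\mathcal E$ returns the verdict $\pi_\Gamma(a)\ne\pi_\Gamma(a')$ on each such pair, so $A\in\mathcal A$. This establishes (3-$\mathcal A$) and completes the argument.

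The step I expect to be the main obstacle is justifying (2-$\mathcal A$) \emph{in full generality}, i.e. the passage from ``$\mathcal E$ is correct on $S\times S$'' to ``$\mathcal E$ is correct whenever it halts''. Without the latter, a set $A\in\mathcal A$ with $A\not\subseteq S$ might contain two $\pi_\Gamma$-equal words on which $\mathcal E$ wrongly stopped with a ``distinct'' verdict, destroying injectivity. One route is to read this soundness into the definition of ``EP solvable on $S$''. A safer route, and the one I would actually carry out, uses that $\Gamma$ is recursively presented, so $\ker\pi_\Gamma$ is recursively enumerable: while running the queries $\mathcal E(t,t')$ for a candidate finite set $T$, simultaneously enumerate $\ker\pi_\Gamma$, maintain the partition of $T$ generated by the pairs whose quotient has already been listed in $\ker\pi_\Gamma$, and—once every $\mathcal E(t,t')$ has halted and each of its ``equal'' verdicts has been matched by an already-listed kernel element—output a transversal of that partition. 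This keeps (1-$\mathcal A$), and keeps (3-$\mathcal A$) because for $T=S'\subseteq S$ the solver is correct, so its ``equal'' verdicts are genuine relations, they eventually surface in the enumeration, the partition stabilises to the fibre partition of $S'$, and the emitted transversal realises $\pi_\Gamma(S')$. The delicate point—the real work of the lemma—is to show that \emph{every} emitted transversal is genuinely injective, equivalently that $\mathcal E$ never stops with a ``distinct'' verdict on a pair $t,t'$ with $\pi_\Gamma(t)=\pi_\Gamma(t')$ before its kernel witness $tt'^{-1}$ appears in the enumeration of $\ker\pi_\Gamma$; this is exactly where one must invoke the precise strength of the hypothesis on the EP-solver.
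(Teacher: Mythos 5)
Your first construction is essentially the paper's own proof: the paper runs the EP-algorithm on all pairs of each finite subset $B$ in an enumeration of all finite subsets of $\mathbb F_{\mathsf X}$, outputs those $B$ for which every pair is certified to have distinct images (this gives (1-$\mathcal A$) and (2-$\mathcal A$) by construction), and obtains (3-$\mathcal A$) exactly as you do, by taking a transversal $A\subset S'\subset S$ of the fibres of ${\pi_\Gamma}_{|_{S'}}$, on which the solver is guaranteed to halt correctly. The obstacle you flag is resolved by the intended reading of ``EP solvable on $S$'' (the convention of \cite{Afinite} and of the generic-case literature, which the paper adopts implicitly when it asserts (2-$\mathcal A$) ``by construction''): the partial algorithm never returns a wrong verdict when it halts; the hypothesis on $S$ only adds that it is moreover guaranteed to halt on $S\times S$. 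Your ``safer route'' is therefore unnecessary, and, as you yourself concede, it does not actually remove the soundness requirement: if the solver could wrongly report ``distinct'' on a pair with equal images, your procedure could still emit a non-injective transversal before the kernel witness appears in the enumeration of $\ker\pi_\Gamma$, so (2-$\mathcal A$) would fail for it just as for the direct construction. In short: your main argument matches the paper; the alternative adds nothing and should be dropped in favour of stating the soundness convention explicitly.
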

\begin{proof}
Let $\mathfrak A$ be the associated algorithm for the solvability of the Equality Problem. Recall that $\mathfrak A$  (at least) stops on $S\times S$. We can easily define an algorithm $\mathfrak A'$ with input $B$, any finite subset  of $\mathbb  F_{\mathsf X}$, that checks if any two words in $B$ represent the same elements in $\Gamma$, that is, it checks if
${\pi_\Gamma}_{|_B}$ is injective.
Clearly $\mathfrak A'$ stops at least for every finite $B\subset S$. Thus we enumerate all finite subsets of $\mathbb  F_{\mathsf X}$: $B_1, B_2,\ldots$ and we simultaneously (diagonally) run $\mathfrak A'$ on these sets, and give as an output only those subsets $B$ for which the two following conditions are met:
$\mathfrak A'$ stops and $\mathfrak A'$ has checked that ${\pi_\Gamma}_{|_B}$ is injective. Let $\mathcal A$ be the set of these outputs.
Propeties (1-{$\mathcal A$}) and (2-{$\mathcal A$}) hold by construction of $\mathcal A$. For any finite $S'\subset S$, for each element of $\pi_\Gamma(S')$ we choose only one representative word in $S'$, obtaining a subset $A\subset S'\subset S$ such that $\pi_\Gamma(A)= \pi_\Gamma(S')$ and ${\pi_\Gamma}_{|_A}$ is injective. Then $A\in \mathcal A$ and the property (3-{$\mathcal A$}) is proved.
\end{proof}

\begin{lemma}[Upper Banach genericity]\label{genefol}
Suppose that $S$ is a generic subset of  $\mathbb F_{\mathsf X}$. Then for every finite subset $F\subset  \mathbb F_{\mathsf X}$ there exists $y\in \mathbb F_{\mathsf X}$ such that $Fy\subset S$. 

\end{lemma}
\begin{proof}
Since for every finite set $F$ there exists $k\in \mathbb N$ such that $F\subset B_k$, without loss of generality we may reduce to the case $F=B_k$.
We denote by $N:=S^c$, the complement of $S$; so that, being $S$ generic, $N$ is \emph{negligible}, that is $\frac{|N\cap B_n|}{|B_n|}\to 0$.
We want to prove that there exists $y\in \mathbb F_{\mathsf X}$ such that $N\cap B_k y=\emptyset$. Recall that $S_n:=B_n\setminus B_{n-1}$ is the $n$-sphere in  $\mathbb F_{\mathsf X}$.

For every $m\in \mathbb N$ we have:
$$B_{m+2k}\supset \bigsqcup_{\omega\in S_m}
 B_k a_{\omega} \omega,$$
where, for every $\omega\in S_m$, the word $a_{\omega}$ is a suitable element of $S_k$  such that $|a_{\omega} \omega|=m+k.$ Let's check the disjointness of the union.
 For all distinct $ \omega,\, \omega'\in S_m$, since $|\omega \omega'^{-1}|\geq 2$ we have 
$|a_{\omega}
\omega 
\omega'^{-1}
 a_{\omega'}^{-1}|\geq 2k+2.$
 By the triangular inequality, $B_k a_{\omega}\omega$ and $B_k a_{\omega'}\omega'$ are disjoint.

Suppose, by contradiction, that $N\cap B_k y\neq \emptyset$ for every $y$, then we have
\begin{equation}\label{quasi}\frac{|B_n\cap N|}{|B_n|}\geq 
\frac{|S_{n-2k}|}{|B_n|}
 \rightarrow
\frac{2|\mathsf X|-2}{(2|\mathsf X|-1)^{2k+1}}.\end{equation}
If $|\mathsf X|\geq 2$, this is impossible since the set $N$ is negligible. \\
If $|\mathsf X|=1$, we notice that in $B_n$ there are approximately $\frac{n}{k}$ disjoint copies of $B_k$ and the limit in \eqref{quasi} equals $\frac{1}{k}$, providing again a contradiction.
\end{proof}
\begin{remark}
There are other notions of genericity: for instance, one may replace the balls $B_n$ by the spheres $S_n=B_n\setminus B_{n-1}$ in Equation \eqref{ggg}. It follows from Cesaro's theorem that any $(S_n)$-generic set is also  $(B_n)$-generic. As a consequence, Lemma \ref{genefol} remains true if we suppose that $S$ is $(S_n)$-generic. Moreover, upper Banach genericity is strictly weaker than genericity: fixing $\mathsf x\in \mathsf X$, for any function $f\colon \mathbb N \to \mathbb N$ the subset $T_f:=\bigcup_{n\in\mathbb N} B_n \mathsf x^{f(n)}$ clearly contains an increasing sequence of translated balls, but the asymptotic behavior of the ratio $\frac{|T_f\cap B_n|}{|B_n|}$ can be arbitrary (it depends on the growth of $f$).
\end{remark}
\begin{lemma}\label{GF} Suppose that $\Gamma$ is amenable and $S\subset \mathbb F_{\mathsf X}$ is generic. Then $\pi_\Gamma(S)$ contains a F\o lner sequence:
$\forall n\in\mathbb N\; \exists\, \Omega_n\in \mathfrak F\o l_{\Gamma,X}(n)  \mbox{ such that } \Omega_n\subset \pi_\Gamma(S).$
\end{lemma}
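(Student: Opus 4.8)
The plan is to combine amenability of $\Gamma$ with the upper Banach genericity property of Lemma \ref{genefol}, together with the right‑translation invariance of the F\o lner condition recorded in the first item of Remark \ref{RF}.

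Fix $n\in\mathbb N$. Since $\Gamma$ is amenable, $\mathfrak F\o l_{\Gamma,X}(n)\neq\emptyset$, so we may pick some $\Omega\in\mathfrak F\o l_{\Gamma,X}(n)$. As $\Omega$ is finite and $\pi_\Gamma$ is onto, we can choose a \emph{finite} subset $F\subset\mathbb F_{\mathsf X}$ with $\pi_\Gamma(F)=\Omega$ (one preimage for each element of $\Omega$). Applying Lemma \ref{genefol} to this finite $F$ and to the generic set $S$, we obtain $y\in\mathbb F_{\mathsf X}$ with $Fy\subset S$. Set
$$\Omega_n:=\pi_\Gamma(Fy)=\pi_\Gamma(F)\,\pi_\Gamma(y)=\Omega\,\pi_\Gamma(y).$$
Then $\Omega_n\subset\pi_\Gamma(S)$ because $Fy\subset S$, while $\Omega_n=\Omega g$ for $g:=\pi_\Gamma(y)\in\Gamma$, hence $\Omega_n\in\mathfrak F\o l_{\Gamma,X}(n)$ by Remark \ref{RF}. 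Letting $n$ vary over $\mathbb N$ yields a F\o lner sequence contained in $\pi_\Gamma(S)$, as required.

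There is essentially no obstacle here: the statement is a direct consequence of Lemma \ref{genefol}. The only points to watch are that the chosen preimage $F$ is finite — automatic, since $\Omega$ is finite — so that Lemma \ref{genefol} applies, and that the passage from $\Omega$ to the translate $\Omega g$ preserves $n$-invariance, which is precisely the first bullet of Remark \ref{RF}. (Note also that, as remarked after Lemma \ref{genefol}, it would suffice to assume $S$ to be $(S_n)$-generic.)
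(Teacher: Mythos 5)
Your proof is correct and follows essentially the same route as the paper: pick a finite preimage of an $n$-F\o lner set, translate it into $S$ via Lemma \ref{genefol}, and use the right-translation invariance from Remark \ref{RF} to conclude that the image is still $n$-F\o lner. No issues.
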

\begin{proof}
Since $\Gamma$ is amenable, for every $n\in\mathbb N$ there exists a finite subset $F_n\subset  \mathbb F_{\mathsf X}$  such that ${\pi_\Gamma(F_n)\in \mathfrak F\o l_{\Gamma,X}(n)}$. Since $S$ is generic, then by virtue of Lemma \ref{genefol} there exists ${y_n\in \mathbb F_{\mathsf X}}$ such that $F_n y_n\subset S$; by Remark \ref{RF}, the set $\Omega_n:=\pi_\Gamma(F_n y_n)\in \mathfrak F\o l_{\Gamma,X}(n)$.
\end{proof}

\begin{proof}[Proof of Theorem \ref{uno}]${}$\\
$(i)\implies (ii)$ is true in general.\\
$(ii)\implies (i)$\\
By virtue of Theorem \ref{WP}, it is enough to show the existence of a finite generating set $Y$ and an algorithm with:\\
INPUT: $n\in \mathbb N$\\
OUTPUT: $F \subset \mathbb F_{\mathsf Y}$ finite, such that $\pi_{\Gamma}(F)\in \mathfrak F\o l_{\Gamma,Y}(n)$ and $|F|=|\pi_\Gamma(F)|.$ \\
Since $\Gamma$ has solvable generic Equality Problem, there exists a set of generators, say $Y$, and a generic subset $S\subset \mathbb F_{\mathsf Y}$ with solvable EP.\\ Let $\mathcal A$ be the family given by Lemma \ref{elenco}. By property (1-${\mathcal A}$) we have a recursive enumeration of $\mathcal A$:  $E_1,E_2,\ldots$ . 
Thanks to property (3-{$\mathcal A$}), the family $\pi_\Gamma(\mathcal A):=\{\pi_\Gamma(E_1), \pi_\Gamma(E_2),\ldots\}$ contains $\{\pi_\Gamma (S'):\; S'\subset S,\; S'\mbox{ finite}\}$ and, by Lemma \ref{GF}, for every $n \in \mathbb N$  we have
$$\pi_\Gamma(\mathcal A) \cap  \mathfrak F\o l_{\Gamma,Y}(n)\neq \emptyset.$$
The property (2-{$\mathcal A$}) ensures that ${\pi_\Gamma}_*(\chi_{E_i})=\chi_{\pi_{\Gamma}(E_i)}$, and therefore by Remark \ref{RF}, for all $n\in\mathbb N$
$$\{{\pi_\Gamma}_*(\chi_{E_1}), {\pi_\Gamma}_*(\chi_{E_2}),\ldots\}\cap \mathfrak Reit_{\Gamma,Y}(n) \neq\emptyset.$$
We now are in position to define the sought algorithm:\\
 for every $n\in \mathbb N$ we  run  the algorithm $\mathfrak K(n)$ used in the proof of Theorem \ref{imp}, simultaneously on the functions $\chi_{E_1}, \chi_{E_2},\ldots$ until one of the subroutines stops, providing a function $\chi$ such that ${{\pi_\Gamma}_*(\chi)\in \mathfrak Reit_{\Gamma,Y}(n)}$. Again, by the property (2-{$\mathcal A$}), the pushforward ${\pi_\Gamma}_*(\chi)$ is still a characteristic function and then by Remark \ref{RF}, the output $F:=Supp(\chi)$ (i.e. $\chi=\chi_F$) satisfies the required conditions.
\end{proof}

\section{Questions and final remarks}\label{QR}
The existence of a recursive universal bound for recursively (resp.\ finitely) presented amenable groups can be related to the arithmetic hierarchy of the property of being amenable. But there is no hope to establish, using our algorithm, if the bound is primitively recursive, since the stopping time depends on the bound itself.
\begin{que}
Is the class of recursively (finitely) presented amenable groups recursively enumerable?
\end{que}
\noindent For solvable groups the question is open (see \cite{miller}), even if in this case a universal bound for F\o lner functions of groups of this class is known \cite{SC}. In \cite{gri} there are some questions and remarks about decidability of amenability and bounds for F\o lner function in some subclasses of groups.
\vspace{6mm}

The Kharlampovich groups $G(M)$ have:
\begin{itemize}
\item unsolvable Word Problem \cite{H};
\item solvable generic Word Problem \cite{KMSS};
\item unsolvable strongly generic Word Problem \cite{GMO};
\item unsolvable generic Equality Problem (Corollary in the Introduction);
\item computable F\o lner sets \cite{CAV,Pre}.
\end{itemize}
Here a subset $ S \subset \mathbb F_{\mathsf X}$ is \emph{strongly generic} if $\frac{|S\cap B_n|}{|B_n|}\to 1$ exponentially fast, and a strongly generic problem is  solvable if it is solvable on a strongly generic set (for some generating set).\\
As an easy consequence, we deduce that solvability of generic WP does not imply solvability of generic EP.
\begin{que}
Does solvability of the strongly generic WP imply solvability of the (strongly) generic EP?
\end{que}
\noindent An answer to this question  would make clearer the relation between Theorem \ref{B} and the following.
\begin{ttt}[{\cite[Thm. 2.3]{GMO}}]
Let $G$ be a finitely presented amenable group with unsolvable word problem. Then for any choice of generators $W\to G$  the word problem in  $G$  is not solvable on any exponentially generic subset of $W$.
\end{ttt}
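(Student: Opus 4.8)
The plan is to argue by contraposition: I assume that for some finite generating set $W$ the word problem of $G$ is solvable, via an algorithm $\mathfrak D$, on an exponentially generic subset $S\subseteq\mathbb F_W$ (here $\mathbb F_W$ is the free group on $W$ and $B_n$ its $n$-ball), and I aim to build a genuine decision procedure for the WP of $G$, contradicting its unsolvability. First I would record some harmless reductions. As $G$ is finitely presented it is recursively presented, so $K:=\ker(\pi_G\colon\mathbb F_W\to G)$ is recursively enumerable, and it is enough to show that $\mathbb F_W\setminus K$ is recursively enumerable as well; then $K$ is recursive. If $|W|\le1$ then $G$ is a quotient of $\mathbb Z$, hence $\mathbb Z$ or finite, with solvable WP; and if $G$ is finite the WP is trivial. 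So I may assume $|W|\ge2$ and $G$ infinite.

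The key step is to show $\pi_G(S)=G$; more precisely, that for every $\omega\in\mathbb F_W$ the fibre $C_\omega:=\pi_G^{-1}(\pi_G(\omega))$ meets $S$ in asymptotic density one,
$$\lim_{n\to\infty}\frac{|C_\omega\cap B_n\cap S|}{|C_\omega\cap B_n|}=1.$$
To get this I would combine two facts. First, writing $N:=\mathbb F_W\setminus S$, exponential genericity gives $C>0$ and $\lambda\in(0,1)$ with $|N\cap B_n|\le C\lambda^n|B_n|$; since $|B_n|\asymp(2|W|-1)^n$ this is $|N\cap B_n|\le C'(\lambda(2|W|-1))^n$ with base $\lambda(2|W|-1)<2|W|-1$. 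Second, by Grigorchuk's cogrowth criterion, amenability of $G=\mathbb F_W/K$ is equivalent to the cogrowth of $K$ attaining its maximum $2|W|-1$, i.e.\ $|K\cap B_n|^{1/n}\to2|W|-1$ (along a subsequence of $n$ if one wishes to sidestep parity subtleties, which still suffices below). Since $|C_\omega\cap B_n|\ge|K\cap B_{n-|\omega|}|$ grows like $(2|W|-1)^{n(1+o(1))}$, it eventually dominates $|N\cap B_n|\ge|C_\omega\cap B_n\cap N|$, yielding the displayed limit and in particular $C_\omega\cap S\neq\emptyset$. This is the sole place amenability is genuinely used: for non-amenable $G$ the cogrowth drops below $2|W|-1$, and an exponentially negligible $N$ can engulf a whole coset.

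Then I would run the following decision procedure for ``$\omega\in K$?''. Fix an enumeration $u_1,u_2,\dots$ of $K$. In parallel: process one scans this enumeration and outputs ``$\omega\in K$'' if $\omega$ turns up; process two, for $m=1,2,\dots$, feeds the word $u_m\omega$ to $\mathfrak D$, using that $\pi_G(u_m\omega)=\pi_G(\omega)$, so that any verdict of $\mathfrak D$ on ``$u_m\omega=_G1$'' is a verdict on ``$\omega=_G1$''. By the previous step some $u_m\omega$ lies in $S$, hence $\mathfrak D$ halts on it with the correct verdict, which process two outputs. This exhibits $\mathbb F_W\setminus K$ as recursively enumerable; together with $K$ being r.e.\ this makes $K$ recursive, so $G$ has solvable WP — the desired contradiction.

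The hard part is twofold. One piece is importing the cogrowth criterion in its ``ball'' form and pairing it with the exponential bound on $N$ — and noting that, because neither rate is quantified, the resulting procedure carries no complexity bound (this is fine, since only solvability is claimed). The more delicate piece is the case where the hypothesis only guarantees that $\mathfrak D$ halts with the correct answer on $S$ but may return a wrong answer off $S$: then ``output the first verdict'' is unsound, and process two must instead take a majority vote over the finitely many words $u_m\omega$ of length $\le L$ on which $\mathfrak D$ has so far halted, with $L$ increasing. The density estimate shows that for $L$ large the $S$-members of $C_\omega\cap B_L$ outvote the rest, so the majority verdict is eventually correct and stable; the work is then to set up the halting test so that it cannot fire prematurely with a wrong answer. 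Under the usual convention that a partial decision procedure never returns a wrong answer, this last point disappears and the argument is exactly as above, so I expect pinning down the cogrowth input — and, if needed, the voting bookkeeping — to be the only real obstacle.
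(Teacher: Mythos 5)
This statement is not proved in the paper at all: it is quoted verbatim from \cite{GMO} (Thm.~2.3) in Section \ref{QR}, so the only meaningful comparison is with the paper's internal analogue, Theorem \ref{uno}. Your argument is correct in its main line, and it is essentially the original proof from \cite{GMO}: amenability of $G=\mathbb F_W/K$ gives, via Grigorchuk's cogrowth criterion, that $K$ (hence every coset $K\omega$) has exponential growth rate $2|W|-1$, so an exponentially negligible set cannot swallow a coset; then, since $K$ is recursively enumerable, dovetailing the partial algorithm over the translates $u_m\omega$ ($u_m\in K$) decides the WP, contradicting unsolvability. Two small caveats. First, all you need (and all the cogrowth limsup honestly gives, because of the parity issue you mention) is $K\omega\cap S\neq\emptyset$; the displayed density-one limit inside the coset is stronger than necessary and would require a bit more care, so it is better dropped. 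Second, your worry about algorithms that may answer incorrectly off $S$ is moot for the operative definition: in \cite{KMSS}, \cite{GMO} and implicitly in this paper (its Lemma \ref{elenco} certifies injectivity from answers of $\mathfrak A$ on arbitrary finite sets, which is only sound if $\mathfrak A$ never lies when it halts) a partial decision algorithm is required to be correct whenever it stops; and note that your majority-vote fallback, as described, only gives an eventually stable verdict with no computable moment at which to commit, so it would not by itself yield a halting procedure. The contrast with the paper is instructive: your cogrowth route genuinely needs \emph{exponential} genericity (the complement of a merely generic set is only $o(|B_n|)$, which does not lose to the coset bound $(2|W|-1)^{(1-\varepsilon)n}$), whereas the paper's proof of Theorem \ref{uno} replaces cogrowth by the F\o lner/Reiter machinery (the algorithm $\mathfrak K(n)$ of Theorem \ref{imp}, upper Banach genericity Lemma \ref{genefol}, and the characterization of solvable WP in Theorem \ref{WP}), which works for plain genericity and for the Equality Problem in all recursively presented amenable groups, at the price of a more elaborate argument.
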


\noindent We can also measure genericity for the Equality Problem in $\mathbb F_{\mathsf X}\times \mathbb F_{\mathsf X}$ with a general subset, not necessarily of type $S\times S$, that is
$$T\subset \mathbb F_{\mathsf X}\times \mathbb F_{\mathsf X} \mbox{ is $(B_n\times B_n)$-\emph{generic} if } \frac{|T\cap (B_n\times B_n)|}{|B_n\times B_n|}\to 1.$$
With this weaker notion of genericity for the EP it is not clear if we can reach the analogous thesis of Theorem \ref{B}.

Finally, the last question that we asked in \cite{Pre}: --Does subrecursivity of the F\o lner function imply computability of F\o lner sets?-- can be replaced, in view of Theorem \ref{C}, by the following.
\begin{que}
Does there exist a recursively presented amenable group that has not computable F\o lner sets?
\end{que}

\bibliographystyle{amsxport}
\begin{bibdiv}
\begin{biblist}

\bib{AO}{article}{
   author={Arzhantseva, G. N.},
   author={Olshanskii, A. Yu.},
   title={Generality of the class of groups in which subgroups with a lesser
   number of generators are free},
   language={Russian, with Russian summary},
   journal={Mat. Zametki},
   volume={59},
   date={1996},
   number={4},
   pages={489--496, 638},
   %issn={0025-567X},
   translation={
      journal={Math. Notes},
      volume={59},
      date={1996},
      number={3-4},
      pages={350--355},
   %   issn={0001-4346},
   },
  % review={\MR{1445193}},
  % doi={10.1007/BF02308683},
}

\bib{Boone}{article}{
   author={Boone, W. W.},
   title={The Word Problem},
   journal={Proc. Nat. Acad. Sci. U.S.A.},
   volume={44},
   date={1958},
   pages={1061--1065},
  % issn={0027-8424},
  % review={\MR{0101267 (21 \#80)}},
}

\bib{Zheng}{article}{
   author={Brieussel, J. },
    author={Zheng, T. },
   title={Speed of random walks, isoperimetry and compression of finitely generated groups},
  eprint={arXiv:1510.08040 [math.GR]}
}

	\bib{CAV}{thesis}{
   author={Cavaleri, M.},
   title={Algorithms and quantifications in amenable and sofic groups},

   note={Thesis (Ph.D.)--Sapienza Universit\`a di Roma},

   date={2016},

}

\bib{Pre}{article}{
   author={Ca\-valeri, M.},
   title={Computability of F\o lner sets},
   journal={Internat. J. Algebra Comput.},
   volume={27},
   date={2017},
   number={7},
   pages={819--830},
   %issn={0218-1967},
   %review={\MR{3730757}},
   %doi={10.1142/S0218196717500382},
}

\bib{tullio}{book}{
   author={Ceccherini-Silberstein, T.},
   author={Coornaert, M.},
   title={Cellular automata and groups},
   series={Springer Monographs in Mathematics},
   publisher={Springer-Verlag, Berlin},
   date={2010},
   pages={xx+439},
   %isbn={978-3-642-14033-4},
   %review={\MR{2683112 (2011j:37002)}},
   %doi={10.1007/978-3-642-14034-1},
}

\bib{G1}{article}{
   author={Champetier, C.},
   title={Petite simplification dans les groupes hyperboliques},
   language={French, with English and French summaries},
   journal={Ann. Fac. Sci. Toulouse Math. (6)},
   volume={3},
   date={1994},
   number={2},
   pages={161--221},
   %issn={0240-2955},
   %review={\MR{1283206}},
}

\bib{Coo}{book}{
   author={Coornaert, M.},
   title={Topological dimension and dynamical systems},
   language={English},
   series={Universitext},
      publisher={Springer-Verlag, Berlin},
   date={2015},
  
}

\bib{Dehn}{article}{
   author={Dehn, M.},
   title={\"Uber unendliche diskontinuierliche Gruppen},
   language={German},
   journal={Math. Ann.},
   volume={71},
   date={1911},
   number={1},
   pages={116--144},
   %issn={0025-5831},
   %review={\MR{1511645}},
   %doi={10.1007/BF01456932},
}

\bib{DJS}{article}{
   author={Downey, R. G.},
   author={Jockusch, C. G., Jr.},
   author={Schupp, P. },
   title={Asymptotic density and computably enumerable sets},
   journal={J. Math. Log.},
   volume={13},
   date={2013},
   number={2},
   pages={1350005, 43},
   %issn={0219-0613},
   %review={\MR{3125901}},
   %doi={10.1142/S0219061313500050},
}

\bib{A2}{article}{
   author={Erschler, A.},
   title={Piecewise automatic groups},
   journal={Duke Math. J.},
   volume={134},
   date={2006},
   number={3},
   pages={591--613},
   %issn={0012-7094},
   %review={\MR{2254627 (2007k:20086)}},
   %doi={10.1215/S0012-7094-06-13435-X},
}

\bib{A1}{article}{
   author={Ersch\-ler, A.},
   title={On isoperimetric profiles of finitely generated groups},
   journal={Geom. Dedicata},
   volume={100},
   date={2003},
   pages={157--171},
   %issn={0046-5755},
   %review={\MR{2011120 (2004j:20087)}},
   %doi={10.1023/A:1025849602376},
}

\bib{GMO}{article}{
   author={Gilman, R.},
   author={Miasnikov, A.},
   author={Osin, D.},
   title={Exponentially generic subsets of groups},
   journal={Illinois J. Math.},
   volume={54},
   date={2010},
   number={1},
   pages={371--388},
   %issn={0019-2082},
   %review={\MR{2777000}},
}

\bib{gri}{article}
    { author={Grigorchuk, R.},
      title={Solved and unsolved problems around one group},
      conference={ title={Infinite groups: geometric, combinatorial and dynamical aspects}, },
      book={ series={Progr. Math.},
        volume={248},
        publisher={Birkh\"auser, Basel}, },
      date={2005},
      pages={117--218},
      %review={\MR{2195454 (2007d:20001)}}, doi={10.1007/3-7643-7447-0-5},
    }

\bib{G}{article}{
   author={Gromov, M.},
   title={Entropy and isoperimetry for linear and non-linear group actions},
   journal={Groups Geom. Dyn.},
   volume={2},
   date={2008},
   number={4},
   pages={499--593},
  % issn={1661-7207},
  % review={\MR{2442946 (2010h:37011)}},
  % doi={10.4171/GGD/48},
}

\bib{G2}{article}{
   author={Gro\-mov, M.},
   title={Hyperbolic groups},
   conference={
      title={Essays in group theory},
   },
   book={
      series={Math. Sci. Res. Inst. Publ.},
      volume={8},
      publisher={Springer, New York},
   },
   date={1987},
   pages={75--263},
   %review={\MR{919829}},
   %doi={10.1007/978-1-4613-9586-7 3},
}

\bib{JS}{article}{
   author={Jockusch, C. G., Jr.},
   author={Schupp, P.},
   title={Generic computability, Turing degrees, and asymptotic density},
   journal={J. Lond. Math. Soc. (2)},
   volume={85},
   date={2012},
   number={2},
   pages={472--490},
   %issn={0024-6107},
   %review={\MR{2901074}},
   %doi={10.1112/jlms/jdr051},
}

\bib{KMSS}{article}{
   author={Kapovich, I.},
   author={Miasnikov, A.},
   author={Schupp, P.},
   author={Shpilrain, V.},
   title={Generic-case complexity, decision problems in group theory, and
   random walks},
   journal={J. Algebra},
   volume={264},
   date={2003},
   number={2},
   pages={665--694},
   %issn={0021-8693},
   %review={\MR{1981427}},
   %doi={10.1016/S0021-8693(03)00167-4},
}

\bib{KS}{article}{
   author={Kapovich, I.},
   author={Schupp, P.},
   title={Genericity, the Arzhantseva-Ol\cprime shanskii method and the
   isomorphism problem for one-relator groups},
   journal={Math. Ann.},
   volume={331},
   date={2005},
   number={1},
   pages={1--19},
   %issn={0025-5831},
   %review={\MR{2107437}},
   %doi={10.1007/s00208-004-0570-x},
}

\bib{KKS}{article}{
   author={Kapovich, I.},
   author={Schupp, P.},
   author={Shpilrain, V.},
   title={Generic properties of Whitehead's algorithm and isomorphism
   rigidity of random one-relator groups},
   journal={Pacific J. Math.},
   volume={223},
   date={2006},
   number={1},
   pages={113--140},
   %issn={0030-8730},
   %review={\MR{2221020}},
   %doi={10.2140/pjm.2006.223.113},
}

\bib{H}{article}{
   author={Kharlampovich, O.},
   title={A finitely presented solvable group with unsolvable Word Problem},
   language={Russian},
   journal={Izv. Akad. Nauk SSSR Ser. Mat.},
   volume={45},
   date={1981},
   number={4},
   pages={852--873, 928}}

\bib{KMS}{article}{
   author={Kharlampovich, O.},
   author={Miasnikov, A.},
   author={Sapir, M.},
   title={Algorithmically complex residually finite groups},
   journal={Bull. Math. Sci.},
   volume={7},
   date={2017},
   number={2},
   pages={309--352},
  % issn={1664-3607},
  % review={\MR{3671739}},
  % doi={10.1007/s13373-017-0103-z},
}

\bib{Paul}{book}{
   author={Lyndon, R. C.},
   author={Schupp, P. },
   title={Combinatorial group theory},
   series={Classics in Mathematics},
   note={Reprint of the 1977 edition},
   publisher={Springer-Verlag, Berlin},
   date={2001},
   pages={xiv+339},
  % isbn={3-540-41158-5},
  % review={\MR{1812024 (2001i:20064)}},
}

\bib{Mal}{book}{
   author={Malcev, A. I.},
   
   title={Algorithms and recursive functions},
  
   publisher={Nauka, Moscow},
   date={1965},
 
}

\bib{Afinite}{article}{
   author={Miasnikov, A.},
   author={Osin, D.},
   title={Algorithmically finite groups},
   journal={J. Pure Appl. Algebra},
   volume={215},
   date={2011},
   number={11},
   pages={2789--2796},
  % issn={0022-4049},
  % review={\MR{2802165}},
  % doi={10.1016/j.jpaa.2011.03.019},
}
\bib{MR}{article}{
   author={Miasnikov, A.},
   author={Rybalov, A. N.},
   title={Generic complexity of undecidable problems},
   journal={J. Symbolic Logic},
   volume={73},
   date={2008},
   number={2},
   pages={656--673},
  % issn={0022-4812},
  % review={\MR{2414470}},
  % doi={10.2178/jsl/1208359065},
}

\bib{cry}{book}{
   author={Miasnikov, A.},
   author={Shpilrain, V.},

   author={Ushakov, A.},
   title={Group-based cryptography},
   series={Advanced Courses in Mathematics. CRM Barcelona},
   publisher={Birkh\"auser Verlag, Basel},
   date={2008},
   pages={xvi+183},
   %isbn={978-3-7643-8826-3},
   %review={\MR{2437984}},
}

\bib{miller}{article}{
   author={Miller, C. F., III},
   title={Decision problems for groups---survey and reflections},
   conference={
      title={Algorithms and classification in combinatorial group theory},
      address={Berkeley, CA},
      date={1989},
   },
   book={
      series={Math. Sci. Res. Inst. Publ.},
      volume={23},
      publisher={Springer, New York},
   },
   date={1992},
   pages={1--59},
  % review={\MR{1230627}},
   }

\bib{Moo}{article}{
   author={Moore, J. T.},
   title={Fast growth in the F\o lner function for Thompson's group $F$},
   journal={Groups Geom. Dyn.},
   volume={7},
   date={2013},
   number={3},
   pages={633--651},
  % issn={1661-7207},
  % review={\MR{3095713}},
  % doi={10.4171/GGD/201},
}

\bib{Novikov}{book}{
   author={Novikov, P. S.},
   title={Ob algoritmi\v cesko\u\i\ nerazre\v simosti problemy to\v zdestva
   slov v teorii grupp},
   language={Russian},
   series={Trudy Mat. Inst. im. Steklov. no. 44},
   publisher={Izdat. Akad. Nauk SSSR, Moscow},
   date={1955},
   pages={143},
   %review={\MR{0075197 (17,706b)}},
}

\bib{G3}{article}{
   author={Olshanskii, A. Yu.},
   title={Almost every group is hyperbolic},
   journal={Internat. J. Algebra Comput.},
   volume={2},
   date={1992},
   number={1},
   pages={1--17},
  % issn={0218-1967},
  % review={\MR{1167524}},
  % doi={10.1142/S0218196792000025},
}

\bib{OO}{article}{
   author={Olshanskii, A. Yu.},
   author={Osin, D. V.},
   title={A quasi-isometric embedding theorem for groups},
   journal={Duke Math. J.},
   volume={162},
   date={2013},
   number={9},
   pages={1621--1648},
  % issn={0012-7094},
  % review={\MR{3079257}},
  % doi={10.1215/00127094-2266251},
}

\bib{rei}{book}{
   author={Reiter, H.},
   title={Classical harmonic analysis and locally compact groups},
   publisher={Clarendon Press, Oxford},
   date={1968},
   pages={xi+200},
   %review={\MR{0306811}},
}

\bib{SC}{article}{
   author={Saloff-Coste, L.},
   author={Zheng, T.},
   title={Random walks on free solvable groups},
   journal={Math. Z.},
   volume={279},
   date={2015},
   number={3-4},
   pages={811--848},
   %issn={0025-5874},
   %review={\MR{3318252}},
   %doi={10.1007/s00209-014-1395-2},
}

\bib{VER}{article}{
   author={Vershik, A.},
   title={Amenability and approximation of infinite groups},
   note={Selected translations},
   journal={Selecta Math. Soviet.},
   volume={2},
   date={1982},
   number={4},
   pages={311--330},
  % issn={0272-9903},
  %review={\MR{721030}},
}

\end{biblist}
\end{bibdiv}
\end{document}